\newcommand{\I}{{\bf 1}}
\newtheorem{proposition}{Proposition}[section]
\newtheorem{theorem}[proposition]{Theorem}
\newtheorem{lemma}[proposition]{Lemma}
\newtheorem{remark}[proposition]{Remark}
\newcommand{\nc}{\newcommand}
\nc{\R}{{\mathbb R}}
\nc{\bS}{{\mathbb S}^{d-1}}
\nc{\N}{{\mathbb N}}
\nc{\C}{{\mathbb C}}
\nc{\Z}{{\mathbb Z}}
\nc{\BP}{\mathbb{P}}
\nc{\BE}{\mathbb{E}}
\nc{\BQ}{\mathbb{Q}}
\nc{\bN}{{\mathbf N}}
\nc{\BX}{{\mathbb X}}
\nc{\BY}{{\mathbb Y}}
\nc{\cB}{{\mathcal B}}
\nc{\cE}{{\mathcal E}}
\nc{\cL}{{\mathcal L}}
\nc{\cX}{{\mathcal X}}
\nc{\cY}{{\mathcal Y}}
\nc{\dint}{{\rm d}}
\nc{\D}{\Delta}
\nc{\g}{\gamma}
\nc{\cI}{\mathcal{I}}
\nc{\cZ}{\mathcal{Z}}
\nc{\cum}{\operatorname{cum}}
\nc{\sZ}{{\mathscr{Z}}}
\DeclareMathOperator{\BV}{\operatorname{Var}}
\DeclareMathOperator{\interior}{int}
\DeclareMathOperator{\RST}{\textup{RST}}
\numberwithin{equation}{section}
\begin{document}
\title{Central limit theorems for the radial spanning tree}
\date{}

\renewcommand{\thefootnote}{\fnsymbol{footnote}}

\author{Matthias Schulte\footnotemark[1]\;\, and Christoph Th\"ale\footnotemark[2]\,}

\footnotetext[1]{Institute of Stochastics, Karlsruhe Institute of Technology,
Germany, matthias.schulte@kit.edu}

\footnotetext[2]{Faculty of Mathematics, Ruhr University Bochum, Germany, christoph.thaele@rub.de}

\maketitle

\begin{abstract}
Consider a homogeneous Poisson point process in a compact convex set in $d$-dimensional Euclidean space which has interior points and contains the origin. The radial spanning tree is constructed by connecting each point of the Poisson point process with its nearest neighbour that is closer to the origin. For increasing intensity of the underlying Poisson point process the paper provides expectation and variance asymptotics as well as central limit theorems with rates of convergence for a class of edge functionals including the total edge length. \bigskip\\
{\bf Keywords}. {Central limit theorem, directed spanning forest, Poisson point process, radial spanning tree, random graph.}\\
{\bf MSC}. Primary 60D05; Secondary 60F05, 60G55.
\end{abstract}

\section{Introduction and results}

Random graphs for which the relative position of their vertices in space determines the presence of edges found considerable attention in the probability and combinatorics literature during the last decades,\ cf.\ \cite{BaBla,FranceschettiMester,Haenggi,Penrose}. Among the most popular models are the nearest-neighbour graph and the random geometric (or Gilbert) graph. Geometric random graphs with a tree structure have attracted particular interest. For example the minimal spanning tree has been studied intensively in stochastic optimization, cf.\ \cite{Steele,Yukich}. Bhatt and Roy \cite{BhattRoy} have proposed a model of a geometric random graph with a tree structure, the so-called minimal directed spanning tree, and in \cite{BaBo} another model has been introduced by Baccelli and Bordenave. This so-called radial spanning tree is also of interest in the area of communication networks as discussed in \cite{BaBo,Bordenave}.

To define the radial spaning tree formally, let $W\subset\R^d$, $d\geq 2$, be a compact convex set with $d$-dimensional Lebesgue measure $\lambda_d(W)>0$ which contains the origin $0$, and let $\eta_t$ be a Poisson point process in $W$ whose intensity measure is a multiple $t\geq 1$ of the Lebesgue measure restricted to $W$, see Section \ref{sec:Preparations} for more details. For a point $x\in\eta_t$ we denote by $n(x,\eta_t)$ the nearest neighbour of $x$ in $\eta_t\cup\{0\}$ which is closer to the origin than $x$, i.e., for which $\|n(x,\eta_t)\|\leq\|x\|$ and $\|x-n(x,\eta_t)\|\leq \|x-y\|$ for all $y\in\eta_t\cap (B^d(0,\|x\|)\setminus\{x\})$, where $\|\cdot\|$ stands for the Euclidean norm and $B^d(z,r)$ is the $d$-dimensional closed ball with centre $z\in\R^d$ and radius $r\geq 0$. In what follows, we call $n(x,\eta_t)$ the radial nearest neighbour of $x$. The radial spanning tree $\RST(\eta_t)$ with respect to $\eta_t$ rooted at the origin $0$ is the random tree in which each point $x\in\eta_t$ is connected by an edge to its radial nearest neighbour, see Figure \ref{fig1} and Figure \ref{fig3d} for pictures in dimensions $2$ and $3$.

In the original paper \cite{BaBo}, the radial spanning tree was constructed with respect to a stationary Poisson point process in $\R^d$, and properties dealing with individual edges or vertices such as edge lengths and degree distributions as well as the behaviour of semi-infinite paths were considered. Moreover, spatial averages of edge lengths within $W$ have been investigated, especially when $W$ is a ball with increasing radius. Semi-infinite paths and the number of infinite subtrees of the root were further studied by Baccelli, Coupier and Tran \cite{BacCoupierTran}, while Bordenave \cite{Bordenave} considered a closely related navigation problem. However, several questions related to cumulative functionals as the total edge length remained open. For example, the question of central limit theorem for the total edge length of the radial spanning tree within a convex observation window $W$ has been brought up by Penrose and Wade \cite{PenroseWade} and is still one of the prominent open problems. Note that for such a central limit theorem the set-up in which the intensity goes to infinity and the set $W$ is kept fixed is equivalent to the situation in which $W$ increases to $\R^d$ for fixed intensity. The main difficulty in proving a central limit theorem is that the usual existing techniques are (at least not directly) applicable, see \cite{PenroseWade} for a more detailed discussion. One reason for this is the lack of spatial homogeneity of the construction as a result of the observation that the geometry of the set of possible radial nearest neighbours of a given point changes with the distance of the point to the origin. The main contribution of our paper is a central limit theorem together with an optimal rate of convergence. Its proof relies on a recent Berry-Esseen bound for the normal approximation of Poisson functionals from Last, Peccati and Schulte \cite{LPS}, which because of its geometric flavour is particularly well suited for models arising in stochastic geometry. The major technical problem in order to prove a central limit theorem is to control appropriately the asymptotic behaviour of the variance. This sophisticated issue is settled in our text on the basis of a recent non-degeneracy condition also taken from \cite{LPS}.

Although a central limit theorem for the radial spanning tree is an open problem, we remark that central limit theorems for edge-length functionals of the minimal spanning tree of a random point set have been obtained by Kesten and Lee \cite{KestenLee} and later also by Chatterjee and Sen \cite{ChatterjeeSan}, Penrose \cite{PenroseCLT} and Penrose and Yukich \cite{PenroseYukich}. Moreover, Penrose and Wade \cite{PenroseWadeCLT} have shown a central limit theorem for the total edge length of the minimal directed spanning tree.

\medspace

\begin{figure}[t]
\begin{center}
\includegraphics[width=0.45\columnwidth]{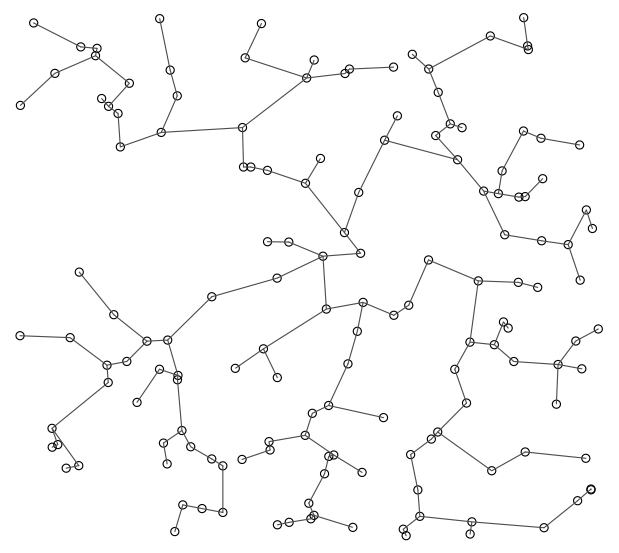}
\includegraphics[width=0.45\columnwidth]{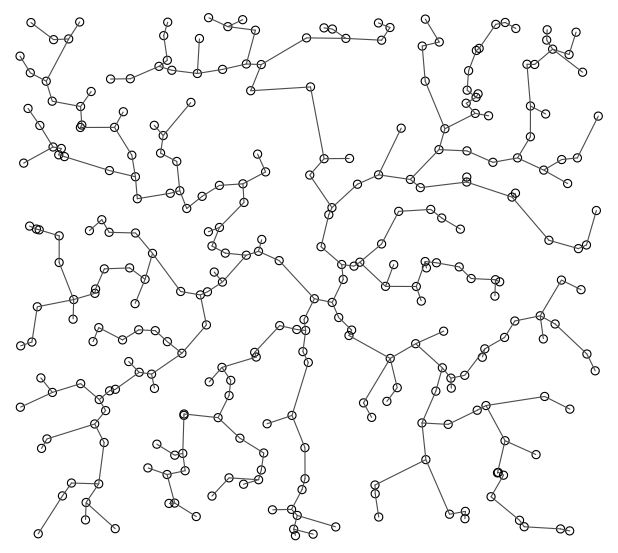}
\end{center}
\caption{\small Simulations of radial spanning trees in the unit square with $t=50$ (left) and $t=300$ (right). They have been produced with the freely available \texttt{R}-package \texttt{spatgraphs}.}\label{fig1}
\end{figure}

\begin{figure}[t]
\begin{center}
\includegraphics[width=0.6\columnwidth]{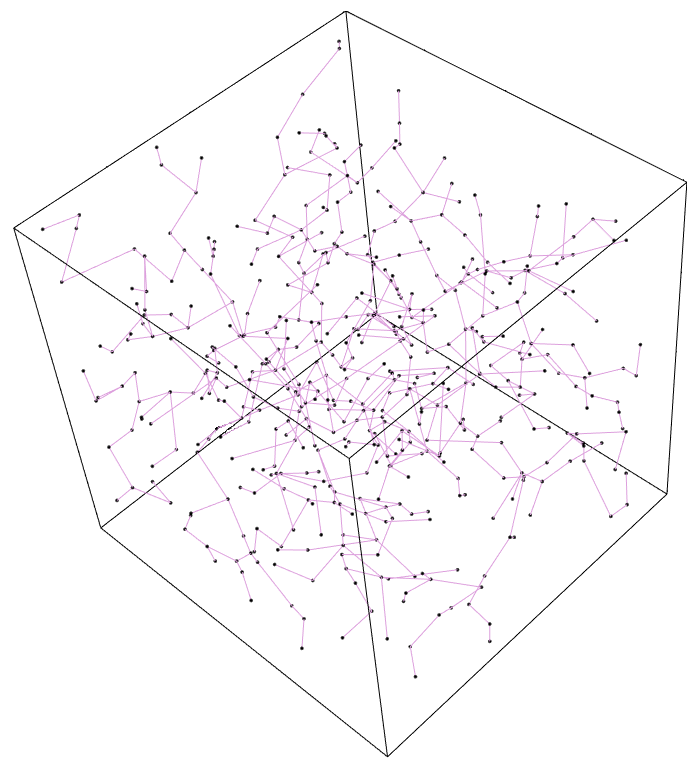}
\end{center}
\caption{\small A simulation of a radial spanning tree in the unit cube with $t=500$. It has been produced with the freely available \texttt{R}-package \texttt{spatgraphs}.}\label{fig3d}
\end{figure}

In order to state our main results formally, we use the notation $\ell(x,\eta_t):=\|x-n(x,\eta_t)\|$ and define the edge-length functionals
\begin{equation}\label{eq:LtaDef}
\cL_t^{(a)} := \sum_{x\in\eta_t} \ell(x,\eta_t)^a\,, \qquad a\geq 0\,, \quad t\geq 1\,,
\end{equation}
of $\RST(\eta_t)$ (the assumption that $a\geq 0$ is discussed in Remark \ref{rem:a>=0} below). Note that $\cL_t^{(0)}$ is just the number of vertices of $\RST(\eta_t)$, while $\cL_t^{(1)}$ is its total edge length. Our first result provides expectation and variance asymptotics for $\cL_t^{(a)}$. To state it, denote by $\kappa_d$ the volume of the $d$-dimensional unit ball and by $\Gamma(\,\cdot\,)$ the usual Gamma function. An explicit representation of the constant $v_a$ in the next theorem will be derived in Lemma \ref{lem:ExistenceVarianceAsymptotics} below.

\begin{theorem}\label{thm:Variance}
Let $a\geq 0$. Then
\begin{equation}\label{eq:ExpectationAsymptotics}
\lim_{t\to\infty}t^{a/d-1}\,\BE[\cL_t^{(a)}] = \left({2\over\kappa_d}\right)^{a/d}\Gamma\left(1+{a\over d}\right)\,\lambda_d(W)
\end{equation}
and there exists a constant $v_a\in(0,\infty)$ only depending on $a$ and $d$ such that
\begin{equation}\label{eq:VarianceAsymptotics}
\lim_{t\to\infty}t^{2a/d-1}\,\BV[\cL_t^{(a)}] = v_a\,\lambda_d(W)\,.
\end{equation}
\end{theorem}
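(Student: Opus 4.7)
My plan is the following. For the expectation asymptotic (1.2), the Mecke formula yields $\BE[\cL_t^{(a)}]=t\int_W\BE[\ell(x,\eta_t+\delta_x)^a]\,\dint x$. Since the origin is always an admissible candidate, $\ell(x,\eta_t+\delta_x)\leq\|x\|$ almost surely, and for $0<r\leq\|x\|$,
\begin{equation*}
\BP[\ell(x,\eta_t+\delta_x)>r]=\exp(-tV(x,r)),\qquad V(x,r):=\lambda_d\bigl(B^d(x,r)\cap B^d(0,\|x\|)\cap W\bigr).
\end{equation*}
The geometric input is that for every $x\in\interior(W)\setminus\{0\}$, $B^d(0,\|x\|)$ locally looks like a tangent half-space at $x$, whence $V(x,r)=(\kappa_d/2)r^d+o(r^d)$ as $r\to 0$. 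Writing $\BE[\ell^a]=\int_0^{\|x\|}a r^{a-1}e^{-tV(x,r)}\,\dint r$, substituting $r=st^{-1/d}$, and passing to the limit by dominated convergence pointwise in $x$ yields $t^{a/d}\BE[\ell(x,\eta_t+\delta_x)^a]\to(2/\kappa_d)^{a/d}\Gamma(1+a/d)$; a second dominated-convergence pass over $W$ (with envelope $c\,t^{-a/d}\wedge\|x\|^a$) gives (1.2).

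For the variance (1.3), I would employ the second-order Mecke formula to split
\begin{equation*}
\BV[\cL_t^{(a)}]=t\int_W\BE[\ell(x,\eta_t+\delta_x)^{2a}]\,\dint x+t^2\int_W\int_W K_t(x,y)\,\dint x\,\dint y,
\end{equation*}
where $K_t(x,y)=\BE[\ell(x,\tilde\eta)^a\ell(y,\tilde\eta)^a]-\BE[\ell(x,\eta_t+\delta_x)^a]\BE[\ell(y,\eta_t+\delta_y)^a]$ with $\tilde\eta:=\eta_t+\delta_x+\delta_y$. The diagonal term scales like $t^{1-2a/d}$ by the first-part argument with $a$ replaced by $2a$. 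The essential new input is a \emph{stabilization} estimate: $\ell(x,\eta_t)$ depends only on $\eta_t\cap B^d(x,R_t(x))$ with $\BP[R_t(x)>r]\leq c\,e^{-c'tr^d}$ (uniformly on compact subsets of $W$ bounded away from $0$), which follows directly from the half-ball void probability. Consequently $K_t(x,y)$ decays exponentially in $t\|x-y\|^d$.

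To identify the limit, I would rescale $y=x+zt^{-1/d}$; on that scale $B^d(0,\|x\|)$ converges to its tangent half-space, so $t^{2a/d}K_t(x,x+zt^{-1/d})$ converges to a universal kernel $K_\infty(z)$ independent of $x$ by rotational invariance of the resulting half-space construction. Dominated convergence then produces $\lim_{t\to\infty}t^{2a/d-1}\BV[\cL_t^{(a)}]=v_a\,\lambda_d(W)$ with $v_a$ an explicit integral (to be recorded in Lemma~\ref{lem:ExistenceVarianceAsymptotics}), and its finiteness is automatic from the stabilization tail. The genuinely hard point is strict positivity $v_a>0$; I would derive it from the non-degeneracy criterion of \cite{LPS} by exhibiting an explicit finite configuration of added points near a reference $x\in\interior(W)\setminus\{0\}$ on which the add-one operator for $\cL_t^{(a)}$ has non-trivial randomness. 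The main obstacles throughout are (i) the lack of spatial homogeneity, which forces the whole analysis to be performed locally around each $x$ rather than via translation invariance, and (ii) controlling the contributions from neighbourhoods of the origin and of $\partial W$, where both the half-space approximation and the uniform stabilization tail degrade and must be replaced by coarser but still integrable surrogates.
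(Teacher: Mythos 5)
Your expectation argument and the structure of the variance existence argument (second-order Mecke, the rescaling $y=x+zt^{-1/d}$, the stabilization tail from the half-ball void probability, and the passage to a universal kernel by the tangent-half-space approximation) match the paper closely. The identification of the limit is slightly less explicit than in the paper, where the kernel is written in terms of the edge-length functional of the \emph{directed spanning forest} $\operatorname{DSF}(\eta)$ with direction $e=x/\|x\|$; your ``rotational invariance of the resulting half-space construction'' is the correct justification for $x$-independence, but the DSF identification is what makes the constant $v_a$ concretely computable (Lemma~\ref{lem:ExistenceVarianceAsymptotics}).

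There is, however, a genuine gap in your plan for strict positivity of $v_a$. You propose to apply the non-degeneracy criterion of \cite{LPS} directly to $\cL_t^{(a)}$, working ``locally around each $x$'' to compensate for the lack of translation invariance. But the criterion as used in the paper (Proposition~\ref{prop:LPSvariance}) is stated for a \emph{stationary} Poisson process of intensity one, and the configuration $A_1,\hdots,A_k$ of perturbing points must be uniform over a positive-measure set $W_0$; for the radial spanning tree the geometry of the admissible half-ball $B^d(0,\|x\|)$ varies with $x$ and with $t$, which is precisely what makes a direct application awkward. The paper sidesteps this by proving $v_a=\lim_{r\to\infty}\BV[\widehat{\mathcal{L}}_{B^d(0,r)}^{(a)}]/(\kappa_d r^d)$ (Lemma~\ref{lem:VarianceDSF}) and then applying the criterion to the \emph{stationary} DSF functional $\widehat{\mathcal{L}}_W^{(a)}$ (Lemma~\ref{lem:VarianceStrictlyPositive}), which transfers positivity back to $v_a$. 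You should make this reduction explicit. Moreover, the construction of the perturbing configuration is not a one-liner: the paper needs a genuine case distinction between $a\geq1$ (an equilateral-triangle configuration and a concavity argument based on $2 > 2(1/2)^a+(\sqrt{3}/2)^a$) and $0<a<1$ (a fine grid of $N^d$ points exploiting the subadditivity $|u^a-v^a|\le |u-v|^a$), and these two regimes behave differently precisely because $u\mapsto u^a$ changes convexity at $a=1$. Without specifying such a configuration and checking the resulting inequality uniformly, the claim ``the add-one operator has non-trivial randomness'' does not by itself produce a lower bound of the form required by Proposition~\ref{prop:LPSvariance}.
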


We turn now to the central limit theorem. Our next result in particular ensures that, after suitable centering and rescaling, the functionals $\cL_t^{(a)}$ converge in distribution to a standard Gaussian random variable, as $t\to\infty$.

\begin{theorem}\label{thm:CLT}
Let $a\geq 0$ and let $Z$ be a standard Gaussian random variable. Then there is a constant $C\in(0,\infty)$ only depending on $W$, the parameter $a$ and the space dimension $d$ such that
$$
\sup_{s\in\R}\Big| \BP\Bigg(\frac{\cL_t^{(a)}-\BE[\cL_t^{(a)}]}{\sqrt{\BV[\cL_t^{(a)}]}}\leq s\Bigg) - \BP(Z \leq s)\Big| \leq C\, t^{-1/2}\,, \qquad t\geq 1\,.
$$
\end{theorem}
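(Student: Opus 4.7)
The plan is to apply the Berry--Esseen bound for Poisson functionals of \cite{LPS} to the normalized statistic $F_t := (\cL_t^{(a)}-\BE[\cL_t^{(a)}])/\sqrt{\BV[\cL_t^{(a)}]}$. That bound controls the Kolmogorov distance between $F_t$ and a standard Gaussian by a handful of fourth-moment-type integrals of the first and second add-one-cost operators $D_z\cL_t^{(a)}$ and $D^2_{z_1,z_2}\cL_t^{(a)}$, divided by appropriate powers of $\BV[\cL_t^{(a)}]$. Since Theorem \ref{thm:Variance} already delivers the sharp variance lower bound $\BV[\cL_t^{(a)}]\gtrsim t^{1-2a/d}$, the remaining task is to estimate the numerators.

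The first step is a geometric analysis of $D_z\cL_t^{(a)}$. Inserting $z$ into $\eta_t$ (i) creates a new edge from $z$ to its radial nearest neighbour in $\eta_t\cup\{0\}$, and (ii) reroutes the outgoing edge of every $x\in\eta_t$ with $\|x\|>\|z\|$ and $\|x-z\|<\ell(x,\eta_t)$ so that it now points to $z$ instead of $n(x,\eta_t)$. Thus $D_z\cL_t^{(a)}$ decomposes into $\ell(z,\eta_t\cup\{z\})^a$ plus a finite sum of rerouting contributions, and the set of rerouted points is confined to a cone-like region around the ray from $0$ through $z$. This localization, i.e.\ a stabilization property, is the geometric heart of the argument; the second difference $D^2_{z_1,z_2}\cL_t^{(a)}$ admits an analogous decomposition and typically vanishes unless $z_1$ and $z_2$ share such a region.

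From this decomposition I would derive moment bounds of the form $\BE[|D_z\cL_t^{(a)}|^{4+\varepsilon}]\lesssim t^{-(4+\varepsilon)a/d}$, together with an analogous estimate for $D^2_{z_1,z_2}\cL_t^{(a)}$ carrying an additional factor that decays in $t^{1/d}\|z_1-z_2\|$. Two ingredients feed these estimates: a Poisson tail bound on $\ell(z,\eta_t)$, essentially the nearest-neighbour tail inside the partial ball $B^d(0,\|z\|)\cap W$, and a Poisson tail bound on the number of rerouting points, exploiting the fact that each such $x$ forces an empty ball of radius $\|x-z\|$ around $x$ by the definition of $n(x,\eta_t)$. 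Substituting these into the bound from \cite{LPS} produces terms of order $t\cdot t^{-4a/d}/\BV[\cL_t^{(a)}]^2 \asymp t^{-1}$, whose square root yields the advertised rate $t^{-1/2}$.

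The main obstacle I anticipate is the lack of spatial homogeneity stressed in the introduction, which manifests most severely for points $z$ close to the origin: there $B^d(0,\|z\|)$ is small or partially cut by $\partial W$, and the cone-based stabilization argument degenerates, so the rerouting moments cannot be controlled by the generic estimate. This region must be handled by a separate crude bound, but since its Lebesgue measure is $o(1)$ (or, after a suitable truncation, a small fixed neighbourhood of $0$), its contribution is negligible on the variance scale $t^{1-2a/d}$. The second, more conceptual, obstacle — non-degeneracy of the variance itself — has already been dealt with in Theorem \ref{thm:Variance} via the non-degeneracy criterion from \cite{LPS}; as the introduction emphasizes, without that input the Berry--Esseen bound would be vacuous.
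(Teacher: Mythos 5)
Your plan follows the same route as the paper: apply the Berry--Esseen bound for Poisson functionals from \cite{LPS} (the paper uses its Proposition 1.3, restated here as Proposition \ref{prop:LPS}), verify moment and stabilization conditions on the first- and second-order difference operators via exactly the geometric decomposition you describe (the new edge at $z$ plus the rerouted edges of points whose radial nearest neighbour becomes $z$), and supply the variance lower bound from Theorem \ref{thm:Variance}. Your Poisson tail bound on $\ell(z,\eta_t+\delta_z)$ and the empty-ball count of rerouted points are precisely the content of Lemmas \ref{lem:ExpTail} and \ref{lem:FirstOrderDifferenceOperator}; the only cosmetic difference is that the packaged Proposition \ref{prop:LPS} works with fifth moments of $D_z$ and $D^2_{z_1,z_2}$ together with the $(1/20)$-power integrability of $\BP(D^2_{z_1,z_2}F_t\neq 0)$, rather than the fourth-moment integrals you mention, but these are of the same flavour.

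The one substantive deviation is your anticipated obstacle near the origin, and here your proposed workaround does not quite fit the machinery. You expect the stabilization estimate to degenerate when $z$ is close to $0$ because $B^d(0,\|z\|)$ is small, and propose to quarantine a neighbourhood of the origin and bound its contribution crudely. But the moment hypotheses \eqref{eq:MomentsDifferenceOperator} of Proposition \ref{prop:LPS} must hold \emph{pointwise} for all $z,z_1,z_2\in W$, so a set of positive Lebesgue measure cannot simply be discarded; your remark that its measure is $o(1)$ or ``a small fixed neighbourhood'' does not by itself license omitting it. In fact no case distinction is needed: since $\ell(x,\eta_t)\leq\|x\|$ always (the origin is an admissible radial nearest neighbour), one only needs to lower-bound $\lambda_d(B^d(x,u)\cap B^d(0,\|x\|)\cap W)$ for $0\leq u\leq\|x\|$, and Lemma \ref{lem:KonstanteAlphad} gives the uniform bound $\geq \alpha_W\kappa_d u^d$ valid for \emph{every} $x\in W$, including $x$ near $0$. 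The trick is to shift the ball centre inward by $u/2$ (so the shrunken ball lies in $B^d(0,\|x\|)$) and then use convexity of $W$ via Lemma 2.5 of Last--Penrose. With that uniform constant $\alpha_W$ in hand the exponential tail and moment bounds of Lemmas \ref{lem:ExpTail}, \ref{lem:FirstOrderDifferenceOperator} and \ref{lem:SecondOrderDifferenceOperator} hold across all of $W$, and the separate crude bound you planned never enters the argument.
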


For $a=1$ and $a=0$ Theorem \ref{thm:CLT} says that the total edge length and the number of edges satisfy a central limit theorem, as $t\to\infty$. While for $a>0$ the result is non-trivial, a central limit theorem in the case $a=0$ is immediate from the following observations. Namely, there is a canonical one-to-one correspondence between the points of $\eta_t$ and the edges in the radial spanning tree. Moreover, the number of points of $\eta_t$ is a Poisson-distributed random variable $\eta_t(W)$ with mean (and variance) $t\lambda_d(W)$, and it is well-known from the classical central limit theorem that -- under suitable normalization -- such a random variable is well approximated by a standard Gaussian random variable. Since for this situation the rate $t^{-1/2}$ is known to be optimal, the rate of convergence in Theorem \ref{thm:CLT} can in general not be improved.

\medskip

The radial spanning tree is closely related to another geometric random graph, namely the directed spanning forest, which has also been introduced in \cite{BaBo}. The directed spanning forest $\operatorname{DSF}(\eta_t)$ with respect to a direction $e\in\mathbb{S}^{d-1}$ is constructed in the following way. For $x\in\R^d$ let $H_{x,e}$ be the half-space $H_{x,e}:=\{y\in\R^d: \langle e, y-x\rangle\leq 0\}$. We now take the points of $\eta_t$ as vertices of ${\rm DST}(\eta_t)$ and connect each point $x\in\eta_t$ with its closest neighbour $\hat{n}(x,\eta_t)$ in $\eta_t\cap (H_{x,e}\setminus\{x\})$. If there is no such point, we put $\hat{n}(x,\eta_t)=\emptyset$. This means that we look for the neighbour of a vertex of the directed spanning forest always in the same direction, whereas this direction changes according to the the relative position to the origin in case of the radial spanning tree. The directed spanning forest can be regarded as a local approximation of the radial spanning tree at distances far away from the origin.

As for the radial spanning tree we define $\ell_e(x,\eta_t):=\|x-\hat{n}(x,\eta_t)\|$ if $\hat{n}(x,\eta_t)\neq\emptyset$ and $\ell_e(x,\eta_t):=0$ if $\hat{n}(x,\eta_t)=\emptyset$, and
$$
\widehat{\mathcal{L}}_{t}^{(a)} := \sum_{x\in\eta_t} \ell_e(x,\eta_t)^a\,, \qquad a\geq 0\,, \quad t\geq 1\,.
$$
In order to avoid boundary effects it is convenient to replace the Poisson point process $\eta_t$ in $W$ by a unit-intensity stationary Poisson point process $\eta$ in $\R^d$. For this set-up we introduce the functionals
\begin{equation}\label{eq:widehatLW}
\widehat{\mathcal{L}}_W^{(a)} := \sum_{x\in\eta\cap W} \ell_e(x,\eta)^a\,, \quad a\geq 0\,, \qquad t\geq 1\,.
\end{equation}

Let us recall that a forest is a family of one or more than one disjoint trees, while a tree is an undirected and simple graph without cycles. Although with strictly positive probability $\operatorname{DSF}(\eta_t)$ is a union of more than one disjoint trees, Coupier and Tran have shown in \cite{CoupierTran} that in the directed spanning forest $\operatorname{DSF}(\eta)$ of a stationary Poisson point process $\eta$ \textit{in the plane} any two paths eventually coalesce, implying that $\operatorname{DSF}(\eta)$ is almost surely a tree. Resembling the intersection behaviour of Brownian motions, where the intersection of any finite number of independent Brownian motions in $\R^d$ with different starting points is almost surely non-empty if and only if $d=2$, cf.\ \cite[Theorem 9.3 (b)]{MP}, it seems that a similar property is no more true for dimensions $d\geq 3$. Note that in contrast to these dimension-sensitive results, our expectation and variance asymptotics and our central limit theorem hold for any space dimension $d\geq 2$. 

A key idea of the proof of Theorem \ref{thm:Variance} is to show that, for $a\geq 0$,
$$
\lim_{t\to\infty} t^{a/d-1}\BE[\mathcal{L}_t^{(a)}]=\BE[\widehat{\mathcal{L}}_W^{(a)}]
$$
and
\begin{equation}\label{eq:EVIntro}
\lim_{t\to\infty} t^{2a/d-1}\BV[\mathcal{L}_t^{(a)}]=\lim_{r\to\infty}\frac{\BV[\widehat{\mathcal{L}}_{B^d(0,r)}^{(a)}]}{\kappa_d r^d} \lambda_d(W)\,.
\end{equation}
In other words this means that the expectation and the variance of $\cL_t^{(a)}$ can be approximated by those of $\widehat{\cL}_W^{(a)}$ or $\widehat{\cL}_{B^d(0,r)}^{(a)}$, which are much easier to study because of translation invariance. We shall use a recent non-degeneracy criterion from \cite{LPS} to show that the right-hand side in \eqref{eq:EVIntro} is bounded away from zero, which implies then the same property also for the edge-length functionals of the radial spanning tree. To prove Theorem \ref{thm:CLT} we use again recent findings from \cite{LPS}. These are reviewed in Section \ref{sec:Preparations} together with some other background material. In Section \ref{sec:Variance} we establish Theorem \ref{thm:Variance} before proving Theorem \ref{thm:CLT} in the final Section \ref{sec:ProofCLT}.

\section{Preliminaries}\label{sec:Preparations}

\paragraph{Notation.}  Let $\mathcal{B}(\R^d)$ be the Borel $\sigma$-field on $\R^d$ and let $\lambda_d$ be the Lebesgue measure on $\R^d$. For a set $A\in\mathcal{B}(\R^d)$ we denote by $\interior(A)$ the interior of $A$ and by $\lambda_d|_A$ the restriction of $\lambda_d$ to $A$. For $z\in\R^d$ and $r\geq 0$, $B^d(z,r)$ stands for the closed ball with centre $z$ and radius $r$, and $B^d$ is the $d$-dimensional closed unit ball, whose volume is given by $\kappa_d:=\lambda_d(B^d)$. By $\mathbb{S}^{d-1}$ we denote the unit sphere in $\R^d$.

\paragraph{Poisson point processes.} Let $\bN$ be the set of $\sigma$-finite counting measures on $\R^d$ and let it be equipped with the $\sigma$-field that is generated by all maps $\bN\ni \mu\mapsto\mu(A)$, $A\in\mathcal{B}(\R^d)$. For a $\sigma$-finite non-atomic measure $\mu$ on $\R^d$ a Poisson point process $\eta$ with intensity measure $\mu$ is a random element in $\bN$ such that
\begin{itemize}
\item for all $n\in\N$ and pairwise disjoint sets $A_1,\ldots,A_n\in\mathcal{B}(\R^d)$ the random variables $\eta(A_1),\hdots,\eta(A_n)$ are independent,
\item for $A\in\mathcal{B}(\R^d)$ with $\mu(A)\in(0,\infty)$, $\eta(A)$ is Poisson distributed with parameter $\mu(A)$.
\end{itemize}
We say that $\eta$ is stationary with intensity $t>0$ if $\mu=t\lambda_d$, implying that $\eta$ has the same distribution as the translated point process $\eta+z$ for all $z\in\R^d$. By abuse of terminology we also speak about $t$ as intensity if the measure $\mu$ has the form $t\lambda_d|_W$ for some possibly compact subset $W\subset\R^d$.

A Poisson point process $\eta$ can with probability one be represented as
$$
\eta = \sum_{n=1}^m \delta_{x_n}, \quad x_1,x_2,\hdots\in \R^d, \qquad m\in\N\cup\{0,\infty\}\,,
$$
where $\delta_x$ stands for the unit-mass Dirac measure concentrated at $x\in\R^d$. In particular, if $\mu(\R^d)<\infty$, $\eta$ can be written in form of the distributional identity
$$
\eta=\sum_{n=1}^{M} \delta_{X_n}
$$
with i.i.d.\ random points $(X_n)_{n\in\N}$ that are distributed according to $\mu(\cdot)/\mu(\R^d)$ and a Poisson distributed random variable $M$ with mean $\mu(\R^d)$ that is independent of $(X_n)_{n\in\N}$.

As usual in the theory of point processes, we may identify $\eta$ with its support and write, by slight abuse of notation, $x\in\eta$ whenever $x$ is charged by the random measure $\eta$. Moreover, we write $\eta\cap A$ for the restriction of $\eta$ to a subset $A\in\mathcal{B}(\R^d)$. Although we interpreted $\eta$ as a random set in the introduction, we prefer from now on to regard $\eta$ as a random measure.

To deal with functionals of $\eta$, the so-called multivariate Mecke formula will turn out to be useful for us. It states that, for $k\in\N$ and a non-negative measurable function $f: (\R^d)^k\times \bN\to\R$,
\begin{equation}\label{eq:MeckeMulti}
\begin{split}
\BE\Big[\sum_{(x_1,\hdots,x_k)\in\eta_{\neq}^k} &f(x_1,\hdots,x_k,\eta)\Big] \\
&= \int_{(\R^d)^k}\BE[f(x_1,\hdots,x_k,\eta+\delta_{x_1}+\hdots+\delta_{x_k})]\,\mu^k(\dint(x_1,\hdots,x_k))\,,
\end{split}
\end{equation}
where $\eta_{\neq}^k$ stands for the set of all $k$-tuples of distinct points of $\eta$ and the integration on the right-hand side is with respect to the $k$-fold product measure of $\mu$ (see Theorem 1.15 in \cite{SWNewPers}). It is a remarkable fact that \eqref{eq:MeckeMulti} for $k=1$ also characterizes the Poisson point process $\eta$.

\paragraph{Variance asymptotics and central limit theorems for Poisson functionals.}

As a Poisson functional we denote a random variable $F$ which only depends on a Poisson point process $\eta$. Every Poisson functional $F$ can be written as $F=f(\eta)$ almost surely with a measurable function $f:\bN\to\R$, called a representative of $F$. For a Poisson functional $F$ with representative $f$ and $z\in \R^d$, the first-order difference operator is defined by
$$
D_zF := f(\eta+\delta_z) - f(\eta)\,.
$$
In other words, $D_zF$ measures the effect on $F$ when a point $z$ is added to $\eta$. Moreover, for $z_1,z_2\in \R^d$, the second-order difference operator is given by
\begin{align*}
D_{z_1,z_2}^2F &:= D_{z_1}(D_{z_2}F) =f(\eta+\delta_{z_1}+\delta_{z_2})-f(\eta+\delta_{z_1})-f(\eta+\delta_{z_2})+f(\eta)\,.
\end{align*}
The difference operators play a crucial role in the following result, which is the key tool to establish our central limit theorem, see \cite[Proposition 1.3]{LPS}.

\begin{proposition}\label{prop:LPS}
Let $(F_t)_{t\geq 1}$ be a family of Poisson functionals depending on the Poisson point processes $(\eta_t)_{t\geq 1}$ with intensity measures $t\lambda_d|_W$ for $t\geq 1$, where $W\subset\R^d$ is a compact convex set with interior points. Suppose that there are constants $c_1,c_2\in(0,\infty)$ such that
\begin{equation}\label{eq:MomentsDifferenceOperator}
\BE[|D_zF_t|^5]\leq c_1 \quad \text{and}\quad\BE[|D_{z_1,z_2}^2F_t|^5]\leq c_2\,, \qquad z,z_1,z_2\in W\,, \quad t\geq 1\,.
\end{equation}
Further assume that $t^{-1}\BV[F_t]\geq v$, $t\geq t_0$, for some $v,t_0\in(0,\infty)$ and that
\begin{equation}\label{eq:SupDifferenceOperator}
\sup_{z_1\in W,\,t\geq 1} t\int_{W}\BP(D_{z_1,z_2}^2F_t\neq 0)^{1/20}\,\dint z_2<\infty\,,
\end{equation}
and let $Z$ be a standard Gaussian random variable. Then there is a constant $C\in(0,\infty)$ such that
\begin{equation}\label{eq:BoundLPS}
\sup_{s\in\R} \Big| \BP\Big(\frac{F_t-\BE[F_t]}{\sqrt{\BV[F_t]}}\leq s \Big) -\BP(Z\leq s) \Big| \leq C\, t^{-1/2}\,, \qquad t\geq 1\,.
\end{equation}
\end{proposition}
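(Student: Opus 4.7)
The plan is to deduce this proposition as a direct corollary of a general Berry-Esseen bound for Poisson functionals obtained via the Malliavin-Stein methodology on the Poisson space. First I would invoke an inequality of the schematic form
\[
\sup_{s\in\R}\Big|\BP\Big(\tfrac{F_t-\BE[F_t]}{\sqrt{\BV[F_t]}}\le s\Big)-\BP(Z\le s)\Big| \le \frac{C}{\BV[F_t]}\sqrt{\g_1(F_t)+\g_2(F_t)+\g_3(F_t)} + \frac{C\,\g_4(F_t)}{\BV[F_t]^{3/2}},
\]
where the $\g_i(F_t)$ are integrals over $W^2$ or $W^3$ of products of $L^4$-norms of $D_zF_t$ and $D_{z_1,z_2}^2F_t$, taken against the intensity measure in each coordinate and weighted by indicators on $\{D_{\cdot,\cdot}^2 F_t\ne 0\}$. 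Such an inequality is the output of combining Stein's equation for the Kolmogorov distance with the Poisson integration-by-parts formula (Mecke) and $L^2$-estimates for the inverse Ornstein-Uhlenbeck generator.

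The second step is to convert the $L^4$-moments appearing in the $\g_i(F_t)$ into the $L^5$-moments of \eqref{eq:MomentsDifferenceOperator} while extracting a small power of $\BP(D_{z_1,z_2}^2 F_t\ne 0)$. First-order moments are handled by Jensen's inequality, $\BE[(D_zF_t)^4]\le c_1^{4/5}$. For second-order moments one inserts the indicator and applies H\"older's inequality,
\[
\BE\big[(D_{z_1,z_2}^2 F_t)^4\big] = \BE\big[(D_{z_1,z_2}^2 F_t)^4\,\I\{D_{z_1,z_2}^2 F_t\ne 0\}\big] \le \BE[|D_{z_1,z_2}^2 F_t|^5]^{4/5}\,\BP(D_{z_1,z_2}^2 F_t\ne 0)^{1/5}.
\]
Tracking these exponents through the definition of the $\g_i$'s and the final square root in the Berry-Esseen bound produces an overall $1/20$-power on the non-vanishing probabilities; this is precisely why hypothesis \eqref{eq:SupDifferenceOperator} is imposed with that exponent.

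It remains to estimate the resulting integrals. Each integration variable contributes a factor $t$ from the intensity measure, the moment conditions \eqref{eq:MomentsDifferenceOperator} provide uniform upper bounds on the non-probability factors, and the sup condition \eqref{eq:SupDifferenceOperator} absorbs one spatial integration at the cost of a factor $t^{-1}$. A careful accounting then gives $\g_i(F_t)=O(t)$ for $i=1,2,3$ and $\g_4(F_t)=O(t)$. Combined with the variance lower bound $\BV[F_t]\ge vt$, the right-hand side of the general bound becomes $O(t^{-1/2})$, which is exactly \eqref{eq:BoundLPS}.

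The main obstacle lies entirely in the underlying general Berry-Esseen bound, which is the actual content of \cite{LPS}: since the Stein solution for the Kolmogorov distance is only Lipschitz and not $C^1$, its interaction with the Wiener-It\^o chaotic expansion of $F_t$ requires delicate analysis involving a smoothing argument and second-order Poincar\'e-type identities. Once that abstract bound is in hand, the reduction to the explicit form of Proposition~\ref{prop:LPS} amounts to the bookkeeping exercise sketched above.
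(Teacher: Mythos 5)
The paper does not prove this proposition at all; it is taken verbatim as Proposition~1.3 of \cite{LPS}, and the text merely gives a one-sentence description of the underlying methodology (Stein's method for normal approximation combined with Malliavin calculus on the Poisson space and the Mehler formula for the Ornstein--Uhlenbeck semigroup). Your sketch is therefore not a proof of the statement in the sense the paper uses it --- the paper simply cites the result --- but it is a plausible high-level reconstruction of how the authors of \cite{LPS} deduce their Proposition~1.3 from a more primitive second-order Poincar\'e-type Berry--Esseen bound: the passage from $L^4$-moments of $D_zF_t$ and $D^2_{z_1,z_2}F_t$ to the $L^5$-bounds in hypothesis \eqref{eq:MomentsDifferenceOperator} via H\"older, with the leftover power landing on $\BP(D^2_{z_1,z_2}F_t\neq 0)$, is indeed the standard device, and tracking a $4/5$ power, a Cauchy--Schwarz split, and a final square root does produce exponents like $1/20$. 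The power counting $\gamma_i(F_t)=O(t)$ against $\BV[F_t]\geq vt$ is also the right mechanism for the $t^{-1/2}$ rate. What you have written is thus consistent with the paper's (implicit) approach, which is simply to outsource the entire argument to \cite{LPS}; but be aware that you have not actually proved the key inequality you start from, and you acknowledge this. Since the paper itself does nothing more than cite the result, there is no genuine discrepancy to report, only the caveat that your sketch --- like the paper's citation --- rests entirely on the unstated abstract bound.
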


The proof of Proposition \ref{prop:LPS} given in \cite{LPS} is based on a combination of Stein's method for normal approximation, the Malliavin calculus of variations and recent findings concerning the Ornstein-Uhlenbeck semigroup on the Poisson space around the so-called Mehler formula. Clearly, \eqref{eq:BoundLPS} implies that $(F_t-\BE[F_t])/\sqrt{\BV[F_t]}$ converges in distribution to a standard Gaussian random variable, as $t\to\infty$, but also delivers a rate of convergence for the so-called Kolmogorov distance. A similar bound is also available for the Wasserstein distance, but in order to keep the result transparent, we restrict here to the more prominent and (as we think) natural Kolmogorov distance.

One problem to overcome when one wishes to apply Proposition \ref{prop:LPS} is to show that the liming variance of the family $(F_t)_{t\geq 1}$ of Poisson functionals is not degenerate in that $t^{-1}\BV[F_t]$ is uniformly bounded away from zero for sufficiently large $t$. As discussed in the introduction we will relate the variance of $\mathcal{L}_t^{(a)}$, as $t\to\infty$, to the variance of $\widehat{\mathcal{L}}_{B^d(0,r)}^{(a)}$, as $r\to\infty$. A key tool to show that $\BV[\widehat{\mathcal{L}}_{W}^{(a)}]\geq v_a \lambda_d(W)$ with a constant $v_a\in(0,\infty)$ is the following result, which is a version of Theorem 5.2 in \cite{LPS}.

\begin{proposition}\label{prop:LPSvariance}
Let $\eta$ be a stationary Poisson point process in $\R^d$ with intensity measure $\lambda_d$ and let $F$ be a square-integrable Poisson functional depending on $\eta$ with representative $f:\bN\to\R$. Assume that there exist $k\in\N$, sets $I_1,I_2\subset\{1,\hdots,k\}$ with $I_1\cup I_2=\{1,\hdots,k\}$, a constant $c\in(0,\infty)$ and bounded sets $W_0,A_1,\hdots,A_k\subset\R^d$ with strictly positive Lebesgue measure such that
$$
\big| \BE\big[ f(\eta+\delta_x+\sum_{i\in I_1}\delta_{x+y_i}) - f(\eta+\delta_x+\sum_{i\in I_2}\delta_{x+y_i}) \big] \big| \geq c
$$
for $x\in W_0$ and $y_i\in A_i$, $i\in\{1,\hdots,k\}$. Then, there is a constant $v\in(0,\infty)$ only depending on $c$, $k$ and $A_1,\hdots,A_k$ such that
$$
\BV[F]\geq v \, \lambda_d(W_0)\,.
$$
\end{proposition}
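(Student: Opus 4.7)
The strategy is to invoke the Fock-space isometry for square-integrable Poisson functionals,
$$
\BV[F] \;=\; \sum_{n=1}^{\infty} \frac{1}{n!}\int_{(\R^d)^n} T_n(z_1,\ldots,z_n)^2 \,\lambda_d^n(\dint z),
\qquad T_n(z_1,\ldots,z_n) := \BE[D^n_{z_1,\ldots,z_n}F],
$$
and to extract the desired lower bound by pairing the kernels $T_n$ against a test function constructed from the hypothesis. The first step is to rewrite the hypothesis using the telescoping identity $f(\eta + \sum_{i=1}^m \delta_{u_i}) = \sum_{J \subset \{1,\ldots,m\}} D^{|J|}_{u_j, j \in J}f(\eta)$, applied with $u_0 = x$ and $u_i = x + y_i$. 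Taking expectations on both sides of the difference $S := f(\eta + \delta_x + \sum_{i \in I_1}\delta_{x+y_i}) - f(\eta + \delta_x + \sum_{i \in I_2}\delta_{x+y_i})$ yields a finite signed sum
$$
\BE[S] \;=\; \sum_{J \in \mathcal{J}} \sigma_J\, T_{|J|}(z_J), \qquad \sigma_J \in \{-1,+1\},
$$
indexed by at most $2^{k+1}$ subsets $J \subset \{0,1,\ldots,k\}$, where $z_0 = x$, $z_i = x+y_i$ and $z_J = (z_j)_{j \in J}$.

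Next, in the coordinates $(z_0,\ldots,z_k)$, I would introduce the test function
$$
\varphi(z_0,\ldots,z_k) := \operatorname{sign}(\BE[S])\cdot\I\{z_0 \in W_0,\; z_i - z_0 \in A_i \text{ for } i=1,\ldots,k\}.
$$
A change of variables gives $\int \varphi\cdot\BE[S]\,\lambda_d^{k+1}(\dint z) = \int_{W_0 \times A_1 \times \cdots \times A_k}|\BE[S]|\,\lambda_d^{k+1}(\dint(x,y)) \geq c\,\lambda_d(W_0)\prod_{i=1}^k\lambda_d(A_i)$. Expanding $\BE[S]$ via the $T_n$-sum and applying Cauchy--Schwarz summand by summand yields, for each $J$,
$$
\Big|\int \varphi\, T_{|J|}(z_J)\,\lambda_d^{k+1}(\dint z)\Big|
= \Big|\int \psi_J(z_J)\, T_{|J|}(z_J)\,\lambda_d^{|J|}(\dint z_J)\Big|
\leq \|\psi_J\|_{L^2}\cdot\|T_{|J|}\|_{L^2},
$$
where $\psi_J$ is obtained from $\varphi$ by integrating out the variables with index outside $J$.

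The key step---and the one I expect to be the main obstacle---is to establish the \emph{linear} estimate $\|\psi_J\|_{L^2}^2 \leq C_J\,\lambda_d(W_0)$ with $C_J$ depending only on $A_1,\ldots,A_k$. When $0 \in J$ the variable $z_0$ remains, so the indicator ensures $z_0 \in W_0$ and the bound follows directly. The delicate case is $0 \notin J$, where one has to control $\int \lambda_d(W_0 \cap \bigcap_{i \in J}(z_i - A_i))^2 \,\lambda_d^{|J|}(\dint z_J)$; the naive estimate $\lambda_d(W_0 \cap \,\cdot\,) \leq \lambda_d(W_0)$ is quadratic in $\lambda_d(W_0)$ and would only yield a constant variance lower bound. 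Instead, one uses $\lambda_d(W_0 \cap \bigcap_{i \in J}(z_i - A_i)) \leq \min_{i \in J}\lambda_d(A_i)$ on one copy of the factor and Fubini on the other, since $\int \lambda_d(W_0 \cap \bigcap_{i \in J}(z_i - A_i))\,\lambda_d^{|J|}(\dint z_J) = \lambda_d(W_0)\prod_{i \in J}\lambda_d(A_i)$; this produces the required $O(\lambda_d(W_0))$ bound.

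Finally, a Cauchy--Schwarz step over the finitely many $J$ combined with the Fock-space estimate $\sum_{J \neq \emptyset} \|T_{|J|}\|_{L^2}^2 \leq (k+1)!\,\BV[F]$ (which follows from $\|T_n\|_{L^2}^2 \leq n!\,\BV[F]$ and $\binom{k+1}{n} n! \leq (k+1)!$) yields
$$
c\,\lambda_d(W_0)\prod_i\lambda_d(A_i) \;\leq\; \sqrt{C\,\lambda_d(W_0)}\;\sqrt{(k+1)!\,\BV[F]},
$$
and squaring gives $\BV[F] \geq v\,\lambda_d(W_0)$ with $v$ depending only on $c$, $k$, and $A_1,\ldots,A_k$, as claimed.
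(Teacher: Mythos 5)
Your argument is correct, and it takes a genuinely different route from the paper. The paper's proof invokes Theorem 5.2 of [LPS] as a black box, which produces the bound
\begin{equation*}
\BV[F]\;\geq\;\frac{c^2}{4^{k+2}(k+1)!}\,\min_{\emptyset\neq J}\;\inf_{V}\;\lambda_d^{|J|}(\Pi_J(V))\,,
\end{equation*}
and then the remaining work is elementary measure-theoretic estimation of the projections $\Pi_J(V)$ of large-measure subsets $V\subset U=\{x_1\in W_0,\ x_{i+1}\in x_1+A_i\}$, using boundedness of the $A_i$ to get a diameter bound $R$ for $U$ and hence $\lambda_d^{|J|}(\Pi_J(V))\geq\lambda_d^{k+1}(V)/(\kappa_d R^d)^{|J^C|}$. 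You instead re-derive the needed variance lower bound from first principles via the Wiener--It\^o chaos isometry $\BV[F]=\sum_n\frac1{n!}\|T_n\|_{L^2}^2$, the telescoping expansion of $\BE[S]$ into the kernels $T_n$, a carefully chosen test function $\varphi$, and Cauchy--Schwarz. The two proofs agree on what the delicate point is --- obtaining a bound \emph{linear} in $\lambda_d(W_0)$ rather than quadratic --- but resolve it differently: the paper through the projection inequality above, you through the estimate $\|\psi_J\|_{L^2}^2\leq C_J\,\lambda_d(W_0)$, with the case $0\notin J$ handled by the inequality $\lambda_d(W_0\cap\bigcap_{i\in J}(z_i-A_i))\leq\min_{i\in J}\lambda_d(A_i)$ on one factor and Fubini on the other. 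Your approach is longer but self-contained and transparent, while the paper's is shorter at the cost of treating [LPS, Theorem 5.2] as opaque; both deliver a constant $v$ depending only on $c$, $k$, and $A_1,\ldots,A_k$. Two small points worth recording if you write this up carefully: the number of terms $J\in\mathcal J$ surviving the telescoping cancellation must be bounded by a function of $k$ alone (it is, being at most $2^{k+1}$), and the final Cauchy--Schwarz over $J$ gives a constant like $2^{k+1}(k+1)!$ rather than $(k+1)!$ as stated --- this does not affect the conclusion since it still depends only on $k$.
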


\begin{proof} We define $U=\{(x_1,\hdots,x_{k+1})\in(\R^d)^{k+1}: x_1\in W_0, x_{i+1}\in x_1+A_i, i\in\{1,\hdots,k\}\}$. It follows from Theorem 5.2 in \cite{LPS} that
\begin{equation}\label{eq:BoundVarianceLPS}
\BV[F] \geq \frac{c^2}{4^{k+2}(k+1)!} \min_{\emptyset \neq J\subset \{1,\hdots,k+1\}} \inf_{\substack{V\subset U,\\ \lambda_d^{k+1}(V)\geq \lambda_d^{k+1}(U)/2^{k+2}}} \lambda_d^{|J|}(\Pi_J(V))\,,
\end{equation}
where $\Pi_J(\cdot)$ stands for the projection onto the components whose indices belong to $J$ and $|J|$ denotes the cardinality of $J$. We have that
\begin{equation}\label{eq:ZwischenrechnungVarianz1}
\begin{split}
\lambda_d^{k+1}(U) & = \int_{W_0}\int_{(\R^d)^k} \I\{ x_{i+1}\in x_1+A_i, i\in\{1,\hdots,k\}\} \, \dint(x_2,\hdots,x_{k+1}) \, \dint x_1\\
& = \lambda_d(W_0) \prod_{i=1}^k \lambda_d(A_i)\,.
\end{split}
\end{equation}
Since $A_1,\hdots,A_k$ are bounded, there is a constant $R\in(0,\infty)$ such that, for $(x_1,\hdots,x_{k+1})\in U$,
$$
\|x_i-x_j\|\leq R\,, \quad i,j\in\{1,\hdots,k+1\}\,.
$$
Let $V\subset U$ be such that $\lambda_d^{k+1}(V)\geq \lambda_d^{k+1}(U)/2^{k+2}$ and fix $\emptyset\neq J\subset \{1,\hdots,k+1\}$. We write $x_J$ (resp.\ $x_{J^C}$) for the vector consisting of all variables from $x_1,\ldots,x_{k+1}$ with index in $J$ (resp.\ $J^C$). Then, we have that
\begin{align*}
 \lambda_d^{k+1}(V) & \leq \int_{(\R^d)^{k+1}} \I\{x_J\in\Pi_J(V)\} \I\{(x_J,x_{J^C})\in U\} \, \dint(x_1,\hdots,x_{k+1})\\
& \leq  (\kappa_d R^d)^{|J^C|} \int_{(\R^d)^{|J|}} \I\{ x_J\in\Pi_J(V)\} \, \dint x_J = (\kappa_d R^d)^{|J^C|}\,\lambda_d^{|J|}(\Pi_J(V))\,.
\end{align*}
Because of $\lambda_d^{k+1}(V)\geq \lambda_d^{k+1}(U)/2^{k+2}$ and \eqref{eq:ZwischenrechnungVarianz1}, this implies that
$$
\lambda_d^{|J|}(\Pi_{J}(V))\geq \frac{\lambda_d^{k+1}(V)}{(\kappa_d R^d)^{|J^C|}} \geq \frac{ \prod_{i=1}^k \lambda_d(A_i)}{ 2^{k+2}(\kappa_d R^d)^{|J^C|}} \, \lambda_d(W_0)\,.
$$
Together with \eqref{eq:BoundVarianceLPS}, this concludes the proof.
\end{proof}

\section{Proof of Theorem \ref{thm:Variance}}\label{sec:Variance}

We prepare the proof of Theorem \ref{thm:Variance} by collecting some properties of the Poisson functionals we are interested in. Recall that the edge-length functional $\mathcal{L}_t^{(a)}$ defined at \eqref{eq:LtaDef} depends on the Poisson point process $\eta_t$ with intensity measure $t\lambda_d|_W$ for $t\geq 1$. It has representative
$$
\mathcal{L}^{(a)}(\operatorname{RST}(\xi)) = \sum_{x\in\xi} \ell(x,\xi)^a\,, \qquad \xi\in\bN\,,
$$
where $\ell(x,\xi)=\|x-n(x,\xi)\|$ is the distance between $x$ and its radial nearest neighbour $n(x,\xi)$ in $\xi+\delta_0$. The functional $\ell(\,\cdot\,,\,\cdot\,)$ is monotone and homogeneous in the sense that
\begin{equation}\label{eq:monotone}
\ell(x,\xi_1+\xi_2+\delta_x)\leq \ell(x,\xi_1+\delta_x)\,, \qquad x\in\R^d\,,\quad  \xi_1,\xi_2\in\bN\,,
\end{equation}
and
\begin{equation}\label{eq:homogene}
\ell(sx,s\xi)=s \ell(x,\xi)\,, \qquad x\in\R^d\,, \quad \xi\in\bN\,, \quad s>0\,,
\end{equation}
where $s\xi$ is the counting measure $\sum_{x\in\xi}\delta_{sx}$ we obtain by multiplying each point of $\xi$ with $s$.

The random variable $\widehat{\mathcal{L}}^{(a)}_W$ defined at \eqref{eq:widehatLW} depends on a stationary Poisson point process $\eta$ with intensity one. A representative of $\widehat{\mathcal{L}}^{(a)}_W$ is given by
$$
\widehat{\mathcal{L}}^{(a)}_W(\operatorname{DSF}(\xi))=\sum_{x\in\xi\cap W} \ell_e(x,\xi)^a\,, \qquad \xi\in\bN\,,
$$
where $\ell_e(x,\xi)=\|x-\hat{n}(x,\xi)\|$ is the distance between $x$ and its closest neighbour in $(\xi-\delta_x)\cap H_{x,e}$. Similarly to $\ell(\,\cdot\,,\,\cdot\,)$, the functional $\ell_e(\,\cdot\,,\,\cdot\,)$ is monotone in that
\begin{equation}\label{eq:monotoneelle}
\ell_e(x,\xi_1+\xi_2+\delta_x)\leq \ell_e(x,\xi_1+\delta_x)\,, \qquad x\in\R^d, \quad \xi_1,\xi_2\in\bN\,.
\end{equation}

We make use of two auxiliary results in the proof of Theorem \ref{thm:Variance}, which play a crucial role in Section \ref{sec:ProofCLT} as well.

\begin{lemma}\label{lem:KonstanteAlphad}
There is a constant $\alpha_W\in(0,\infty)$ only depending on $W$ and $d$ such that
$$
\lambda_d(B^d(x,u)\cap B^d(0,\|x\|)\cap W)\geq\alpha_W\lambda_d(B^d(x,u))=\alpha_W\kappa_d u^d
$$
for all $x\in W$ and $0\leq u\leq \|x\|$.
\end{lemma}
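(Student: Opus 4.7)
The plan is to find a ball of radius proportional to $u$ that sits inside the triple intersection $B^d(x,u)\cap B^d(0,\|x\|)\cap W$, with the proportionality constant depending only on $W$ and $d$. I would proceed in two geometric steps: first intersect the two balls, then trim with $W$ using convexity.

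First, I would consider the point $z_x:=(1-u/(2\|x\|))x$, which lies on the segment from $0$ to $x$ because $u\leq\|x\|$. Since $\|z_x-x\|=u/2$ and $\|z_x\|=\|x\|-u/2$, straightforward triangle-inequality estimates give $B^d(z_x,u/2)\subset B^d(x,u)\cap B^d(0,\|x\|)$. The point $z_x$ also lies in $W$ because the segment $[0,x]$ is contained in the convex set $W$.

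Second, using the assumption that $W$ has interior points, I would fix a ball $B^d(w_0,\rho)\subset W$ and set $R:=\max_{y\in W}\|y\|<\infty$. Choose $t:=\min\{1,u/(2(\|w_0-z_x\|+\rho))\}$ and $y:=(1-t)z_x+tw_0$. The affine map $b\mapsto(1-t)z_x+tb$ carries $B^d(w_0,\rho)$ onto $B^d(y,t\rho)$, which lies inside $W$ by convexity (each image point is a convex combination of $z_x\in W$ and a point of $B^d(w_0,\rho)\subset W$) and inside $B^d(z_x,u/2)$ by the estimate $\|y-z_x\|+t\rho=t(\|w_0-z_x\|+\rho)\leq u/2$. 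Since $\|w_0-z_x\|\leq 2R$ and $u\leq\|x\|\leq R\leq 2(2R+\rho)$, one checks $t\rho\geq u\rho/(2(2R+\rho))$, so the lemma follows with $\alpha_W=(\rho/(2(2R+\rho)))^d$.

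I do not anticipate any serious difficulty: the delicate point is to locate the centre $z_x$ precisely so that both spherical inclusions saturate simultaneously, after which the convexity-cone construction uniformly transfers a ball of radius proportional to $u$ into $W$, with constants controlled by the inradius $\rho$ and circumradius $R$ of $W$. The only mild subtlety, the presence of the $\min$ in the choice of $t$, is needed to cover the case when $z_x$ happens to lie close to $w_0$; in both branches of the $\min$ the triangle-inequality estimate above is valid.
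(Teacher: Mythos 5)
Your proof is correct. The first step is identical to the paper's: you both introduce the auxiliary centre $\hat{x}=z_x=(1-u/(2\|x\|))x$ and observe that $B^d(z_x,u/2)\subset B^d(x,u)\cap B^d(0,\|x\|)$, which reduces the problem to showing that a ball centred at a point of $W$ meets $W$ in a set of volume proportional to the ball's volume. Where you diverge is the treatment of that reduced claim: the paper simply invokes (the proof of) Lemma~2.5 of \cite{LastPenrose2013}, which provides a constant $\gamma_W$ with $\lambda_d(B^d(y,r)\cap W)\geq\gamma_W\kappa_d r^d$ for $y\in W$ and $r$ up to the diameter of $W$, giving $\alpha_W=2^{-d}\gamma_W$. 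You instead give a self-contained elementary argument: you fix an inscribed ball $B^d(w_0,\rho)\subset W$, contract it towards $z_x$ by the homothety $b\mapsto(1-t)z_x+tb$ with a carefully chosen $t$, and use convexity of $W$ together with the triangle inequality to place the image ball $B^d(y,t\rho)$ inside both $W$ and $B^d(z_x,u/2)$. Your computations check out, including the two branches of the $\min$ in the definition of $t$ and the final lower bound $t\rho\geq u\rho/(2(2R+\rho))$, yielding the explicit constant $\alpha_W=(\rho/(2(2R+\rho)))^d$. The trade-off is transparency versus brevity: your route avoids the external citation and exhibits $\alpha_W$ explicitly in terms of the inradius $\rho$ and circumradius $R$ of $W$, at the cost of a few more lines of elementary geometry; the paper's route is shorter but opaque as to the value of the constant.
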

\begin{proof}
Let $x\in W$ and $0\leq u\leq \|x\|$ be given and define $\hat{x}:=(1-u/(2\|x\|))x$. Since $\|x-\hat{x}\|=u/2$ and $\|\hat{x}\|=\|x\|-u/2$, $B^d(\hat{x},u/2)$ is contained in $B^d(x,u)$ and in $B^d(0,\|x\|)$ so that
\begin{align}\label{eq:InclusionSets}
B^d(x,u)\cap B^d(0,\|x\|)\cap W \supset B^d(\hat{x},u/2)\cap W\,.
\end{align}
It follows from the proof of Lemma 2.5 in \cite{LastPenrose2013} that there is a constant $\gamma_W\in(0,\infty)$ only depending on $W$ and the dimension $d$ such that
$$
\lambda_d(B^d(y,r)\cap W)\geq \gamma_W \kappa_d r^d
$$
for all $y\in W$ and $0\leq r \leq \max_{z_1,z_2\in W}\|z_1-z_2\|$. Together with \eqref{eq:InclusionSets}, we obtain that
$$
\lambda_d(B^d(x,u)\cap B^d(0,\|x\|)\cap W) \geq \lambda_d(B^d(\hat{x},u/2)\cap W)\geq 2^{-d}\gamma_W \kappa_d u^d\,,
$$
and the choice $\alpha_W:=2^{-d}\gamma_W$ concludes the proof.
\end{proof}

The previous lemma allows us to derive exponential tails for the distribution of $\ell(x,\eta_t+\delta_x)$ as well as bounds for the moments.

\begin{lemma}\label{lem:ExpTail}
For all $t\geq 1$, $x\in W$ and $u\geq 0$ one has that
\begin{equation}\label{eq:BoundTailell}
\BP(\ell(x,\eta_t+\delta_x)\geq u) \leq \exp(-t\alpha_W\kappa_d u^d)\,,
\end{equation}
where $\alpha_W$ is the constant from Lemma \ref{lem:KonstanteAlphad}. Furthermore, for each $a\geq0$ there is a constant $c_a\in(0,\infty)$ only depending on $a$, $d$ and $\alpha_W$ such that
\begin{equation}\label{eq:BoundMomentsell}
t^{a/d} \,\BE[\ell(x,\eta_t+\delta_x)^{a}]\leq c_a\,, \qquad x\in W\,, \quad t\geq 1\,.
\end{equation}
\end{lemma}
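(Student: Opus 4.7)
My plan is to reduce the tail estimate to a Poisson void probability on a region whose Lebesgue measure has already been bounded below in Lemma \ref{lem:KonstanteAlphad}, and then to obtain the moment bound by standard tail integration combined with a Gamma substitution.

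First I would handle \eqref{eq:BoundTailell}. The origin is a candidate for the radial nearest neighbour of $x$ — it lies in $B^d(0,\|x\|)$ at distance $\|x\|$ from $x$ — so $\ell(x,\eta_t+\delta_x) \leq \|x\|$ almost surely, and \eqref{eq:BoundTailell} is trivial when $u > \|x\|$. For $0 \leq u \leq \|x\|$ the event $\{\ell(x,\eta_t+\delta_x) \geq u\}$ forces $\eta_t$ to avoid $B^d(x,u) \cap B^d(0,\|x\|)$, since any point of $\eta_t$ in this intersection would be a candidate at distance strictly less than $u$ from $x$. Recalling that $\eta_t$ lives on $W$ with intensity $t$, the Poisson void probability yields
\[
\BP(\ell(x,\eta_t+\delta_x) \geq u) \leq \exp\!\bigl(-t\,\lambda_d(B^d(x,u) \cap B^d(0,\|x\|) \cap W)\bigr),
\]
and Lemma \ref{lem:KonstanteAlphad} bounds the exponent from below by $t\alpha_W\kappa_d u^d$, giving \eqref{eq:BoundTailell}.

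For \eqref{eq:BoundMomentsell} the case $a = 0$ is immediate, so assume $a>0$. I would integrate the tail,
\[
\BE[\ell(x,\eta_t+\delta_x)^a] = a \int_0^\infty u^{a-1}\,\BP(\ell(x,\eta_t+\delta_x) \geq u)\,\dint u \leq a \int_0^\infty u^{a-1}\exp(-t\alpha_W\kappa_d u^d)\,\dint u,
\]
and the substitution $v = t\alpha_W\kappa_d u^d$ turns the right-hand side into a Gamma integral equal to $(t\alpha_W\kappa_d)^{-a/d}\,\Gamma(1+a/d)$. This yields \eqref{eq:BoundMomentsell} with the explicit value $c_a = (\alpha_W\kappa_d)^{-a/d}\,\Gamma(1+a/d)$.

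There is no real technical obstacle: the only geometric content — that the set of potentially closer candidates always carries Lebesgue measure at least $\alpha_W\kappa_d u^d$ — has already been isolated in Lemma \ref{lem:KonstanteAlphad}, and the remaining steps are the standard Poisson void-probability bound and a routine tail integration.
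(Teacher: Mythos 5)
Your proposal is correct and follows essentially the same route as the paper: reduce the tail bound to a Poisson void probability on $B^d(x,u)\cap B^d(0,\|x\|)\cap W$ (noting $\ell(x,\eta_t+\delta_x)\leq\|x\|$), invoke Lemma \ref{lem:KonstanteAlphad}, and then integrate the tail with a Gamma-function substitution; you even recover the paper's explicit constant $c_a=(\alpha_W\kappa_d)^{-a/d}\Gamma(1+a/d)$. The only cosmetic difference is that you use the layer-cake formula $\BE[\ell^a]=a\int_0^\infty u^{a-1}\BP(\ell\geq u)\,\dint u$ whereas the paper writes $t^{a/d}\BE[\ell^a]=\int_0^\infty\BP(t^{a/d}\ell^a\geq u)\,\dint u$, which are equivalent after a change of variables.
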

\begin{proof}
Since $\ell(x,\eta_t+\delta_x)$ is at most $\|x\|$, \eqref{eq:BoundTailell} is obviously true if $u>\|x\|$. For $0\leq u \leq \|x\|$ we have that
\begin{align*}
\BP(\ell(x,\eta_t+\delta_{x})\geq u) & = \BP(\eta_t(B^d(x,u)\cap B^d(0,\|x\|)\cap W)=0)\\
&=\exp(-t\lambda_d(B^d(x,u)\cap B^d(0,\|x\|)\cap W))\\
&\leq \exp(-t\alpha_W\kappa_du^d)\,,
\end{align*}
where we have used that $\eta_t$ is a Poisson point process and Lemma \ref{lem:KonstanteAlphad}. For fixed $a>0$ the previous inequality implies that
\begin{equation}\label{eq:XXXRRRTTT}
\begin{split}
t^{a/d}\, \BE[\ell(x,\eta_t+\delta_x)^{a}] & = \int_0^\infty  \BP(t^{a/d}\ell(x,\eta_t+\delta_x)^{a} \geq u) \, \dint u\\
& \leq \int_0^\infty \exp(-\alpha_W\kappa_d u^{d/a}) \, \dint u\\
& = \frac{a}{d (\alpha_W\kappa_d)^{a/d}}\int_0^\infty v^{a/d-1} \exp(-v) \, \dint v\\
& = \frac{1}{(\alpha_W \kappa_d)^{a/d}}\Gamma\Big(1+\frac{a}{d}\Big)=:c_a
\end{split}
\end{equation}
for all $t\geq 1$. Finally, if $a=0$, \eqref{eq:BoundMomentsell} is obviously true with $c_0=1$.
\end{proof}

\begin{remark}\label{rem:a>=0}\rm
In Theorem \ref{thm:Variance} and Theorem \ref{thm:CLT} we assume that $a\geq 0$. One reason for this is the first equality in \eqref{eq:XXXRRRTTT}, which is false in case that $a<0$. A similar relation is also used for the variance asymptotics studied below.
\end{remark}

The proof of Theorem \ref{thm:Variance} is divided into several steps. We start with the expectation asymptotics, generalizing thereby the approach in \cite{BaBo} from the planar case to higher dimensions.

\begin{proof}[Proof of \eqref{eq:ExpectationAsymptotics}]
First, consider the case $a=0$. Here, we have that $$t^{a/d-1}\,\BE[\cL_t^{(a)}]=t^{-1}\,\BE[\eta_t(W)]=\lambda_d(W)\,.$$ Next, suppose that $a>0$. We use the Mecke formula \eqref{eq:MeckeMulti} to see that
$$
t^{a/d-1}\,\BE[\cL_t^{(a)}] =\int_W t^{a/d}\,\BE[\ell(x,\eta_t+\delta_x)^a]\,\dint x\,.
$$
By computing the expectation in the integral, we obtain that
\begin{equation}\label{eq:ExpectAsymp1}
t^{a/d-1}\,\BE[\cL_t^{(a)}] = a\int_W \int_0^\infty u^{a-1}\BP(t^{1/d}\ell(x,\eta_t+\delta_{x})\geq u)\,\dint u \,\dint x\,,
\end{equation}
where
\begin{align}\label{eq:WESExpectAsymp}
\BP(t^{1/d}\ell(x,\eta_t+\delta_{x})\geq u) =& \exp(-t\lambda_d(B^d(x,t^{-1/d}u)\cap B^d(0,\|x\|)\cap W))\,, \quad u\geq 0\,,
\end{align}
since $\eta_t$ is a Poisson point process with intensity measure $t\lambda_d|_W$. Since, as a consequence of Lemma \ref{lem:ExpTail},
$$
\BP(t^{1/d}\ell(x,\eta_t+\delta_{x})\geq u) \leq \exp(-\alpha_W\kappa_du^d)
$$
and
$$
a \int_W \int_0^\infty u^{a-1} \exp\left(-\alpha_W\kappa_d u^{d}\right)\,\dint u \, \dint x<\infty\,,
$$
we can apply the dominated convergence theorem to \eqref{eq:ExpectAsymp1} and obtain that
$$
\lim_{t\to\infty}t^{a/d-1}\,\BE[\cL_t^{(a)}] = a \int_W\int_0^\infty u^{a-1} \lim_{t\to\infty}\BP(t^{1/d}\ell(x,\eta_t+\delta_{x})\geq u)\,\dint u\,\dint x
$$
with the probability $\BP(t^{1/d}\ell(x,\eta_t+\delta_{x})\geq u)$ given by \eqref{eq:WESExpectAsymp}. For all $x\in\interior{(W)}$ we have that
$$
\lim_{t\to\infty} t\lambda_d(B^d(x,t^{-1/d}u)\cap B^d(0,\|x\|)\cap W)=\frac{\kappa_d}{2}u^d
$$
and, consequently,
$$
\lim_{t\to\infty} \BP(t^{1/d}\ell(x,\eta_t+\delta_{x})\geq u) = \exp\Big(-{1\over 2}\kappa_du^{d}\Big)\,.
$$
Summarizing, we find that
\begin{align*}
\lim_{t\to\infty}t^{a/d-1}\,\BE[\cL_t^{(a)}] &= \lambda_d(W)\,a\,\int_0^\infty u^{a-1}\,\exp\Big(-{1\over 2}\kappa_du^{d}\Big)\,\dint u\\
& = \left({2\over\kappa_d}\right)^{a/d}\Gamma\Big(1+{a\over d}\Big)\,\lambda_d(W)\,.
\end{align*}
This completes the proof of \eqref{eq:ExpectationAsymptotics}.
\end{proof}


Recall from the definition of the edge-length functionals $\widehat{\mathcal{L}}_W^{(a)}$ for the directed spanning forest that $\ell_e(x,\eta+\delta_x)$ is the distance from $x$ to the closest point of the unit-intensity stationary Poisson point process $\eta$, which is contained in the half-space $H_{x,e}$. A computation similar to that in the proof of \eqref{eq:ExpectationAsymptotics} shows that, for $a>0$ and $x\in\R^d$,
\begin{equation}\label{eq:Expectationellea}
\begin{split}
\BE[\ell_e(x,\eta+\delta_x)^a] & = \BE[\ell_e(0,\eta+\delta_0)^a] = a \int_0^\infty u^{a-1}\BP(\ell(0,\eta+\delta_0)\geq u) \, \dint u\\
 & = a \int_0^\infty u^{a-1}\exp(-\kappa_d u^d/2) \, \dint u =  \left({2\over\kappa_d}\right)^{a/d}\Gamma\left(1+{a\over d}\right)\,,
\end{split}
\end{equation}
whence, by the Mecke formula \eqref{eq:MeckeMulti},
$$
\lim_{t\to\infty} t^{a/d-1}\, \BE[\mathcal{L}^{(a)}_t] = \BE[\widehat{\mathcal{L}}_W^{(a)}]\,.
$$

Our next goal is to establish the existence of the variance limit. Positivity of the limiting variance is postponed to Lemma \ref{lem:VarianceStrictlyPositive} below.

\begin{lemma}\label{lem:ExistenceVarianceAsymptotics}
For any $a\geq 0$ the limit $\lim\limits_{t\to\infty}t^{2a/d-1}\BV[\cL_t^{(a)}]$ exists and equals $v_a\,\lambda_d(W)$ with a constant $v_a\in[0,\infty)$ given by
\begin{align*}
v_a & =\int_{\R^d} \BE[\ell_e(0,\eta+\delta_0+\delta_z)^a  \ell_e(z,\eta+\delta_0+\delta_z)^a] - \BE[\ell_e(0,\eta+\delta_0)^a ] \, \BE[\ell_e(z,\eta+\delta_z)^a ] \, \dint z\\
& \qquad\qquad\qquad + \BE[\ell_e(0,\eta+\delta_0)^{2a}]\,,
\end{align*}
where $e\in\mathbb{S}^{d-1}$ is some fixed direction.
\end{lemma}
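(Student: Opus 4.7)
The plan is to apply the multivariate Mecke formula \eqref{eq:MeckeMulti} with $k=1,2$ to write
\begin{equation*}
\BV[\cL_t^{(a)}] = t\int_W \BE[\ell(x,\eta_t+\delta_x)^{2a}]\,\dint x + t^2\int_W\int_W\Delta_t(x_1,x_2)\,\dint x_1\,\dint x_2 =: I_1(t)+I_2(t),
\end{equation*}
with
\begin{equation*}
\Delta_t(x_1,x_2) := \BE[\ell(x_1,\eta_t+\delta_{x_1}+\delta_{x_2})^a\ell(x_2,\eta_t+\delta_{x_1}+\delta_{x_2})^a] - \BE[\ell(x_1,\eta_t+\delta_{x_1})^a]\BE[\ell(x_2,\eta_t+\delta_{x_2})^a],
\end{equation*}
and treat the two terms separately. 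The diagonal contribution $I_1(t)$ is handled exactly as in the proof of \eqref{eq:ExpectationAsymptotics}: passing to $t^{1/d}\ell$, the tail $\BP(t^{1/d}\ell(x,\eta_t+\delta_x)\geq u)$ converges pointwise, for $x\in\interior(W)\setminus\{0\}$, to $\exp(-\kappa_d u^d/2)$, which is the tail of $\ell_e(0,\eta+\delta_0)$ behind \eqref{eq:Expectationellea} (the ball $B(0,\|x\|)$ rescales to the half-space perpendicular to $x/\|x\|$). The uniform bound from Lemma~\ref{lem:ExpTail} justifies dominated convergence and yields $t^{2a/d-1}I_1(t)\to\lambda_d(W)\,\BE[\ell_e(0,\eta+\delta_0)^{2a}]$.

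For the off-diagonal term I substitute $x_2 = x_1+t^{-1/d}z$, obtaining
\begin{equation*}
t^{2a/d-1}I_2(t) = t^{2a/d}\int_W\int_{t^{1/d}(W-x_1)}\Delta_t(x_1,x_1+t^{-1/d}z)\,\dint z\,\dint x_1.
\end{equation*}
Distributing the $t^{2a/d}$ factor inside the two $\ell$'s via the homogeneity \eqref{eq:homogene} and invoking the same local-geometry rescaling as above, for fixed $x_1\in\interior(W)\setminus\{0\}$ and $z\in\R^d$ the integrand converges pointwise to
\begin{equation*}
\BE[\ell_e(0,\eta+\delta_0+\delta_z)^a\ell_e(z,\eta+\delta_0+\delta_z)^a] - \BE[\ell_e(0,\eta+\delta_0)^a]\BE[\ell_e(z,\eta+\delta_z)^a]
\end{equation*}
with $e = x_1/\|x_1\|$. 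By rotation invariance of $\eta$ the $z$-integral of this expression is independent of $e$ (change of variables $z\mapsto R^{-1}z$ for a rotation taking $e$ to a fixed direction), so the outer integral delivers $\lambda_d(W)\,\bigl(v_a-\BE[\ell_e(0,\eta+\delta_0)^{2a}]\bigr)$; combined with the limit of $I_1(t)$ this reproduces $v_a\,\lambda_d(W)$.

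The main obstacle is producing a $t$-uniform, $z$-integrable dominator for $t^{2a/d}|\Delta_t(x_1,x_1+t^{-1/d}z)|$. I would argue by a decoupling/stabilization device: set $r := t^{-1/d}\|z\|/2$ and introduce the "truncated" stabilizers $\tilde\ell_i := \ell(x_i,\eta_t|_{B^d(x_i,r)}+\delta_{x_i})$. Because $B^d(x_1,r)$ and $B^d(x_2,r)$ are disjoint, $\tilde\ell_1$ and $\tilde\ell_2$ are independent, and the monotonicity \eqref{eq:monotone} shows that on the event $E_i:=\{\ell(x_i,\eta_t+\delta_{x_i})\leq r\}$ the three random variables $\ell(x_i,\eta_t+\delta_{x_1}+\delta_{x_2})$, $\ell(x_i,\eta_t+\delta_{x_i})$ and $\tilde\ell_i$ all coincide, while $\BP(E_i^c)\leq\exp(-\alpha_W\kappa_d(\|z\|/2)^d)$ by Lemma~\ref{lem:ExpTail}. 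Expanding the two expectations in $\Delta_t$ according to the partition $E_1\cap E_2$ versus its complement, the independent product $\BE[\tilde\ell_1^a\mathbf{1}_{E_1}]\BE[\tilde\ell_2^a\mathbf{1}_{E_2}]$ cancels against the leading piece of $\BE[\ell(x_1,\cdot)^a]\BE[\ell(x_2,\cdot)^a]$, and every remaining "bad-event" contribution is estimated by Cauchy--Schwarz together with the moment bound \eqref{eq:BoundMomentsell} applied to orders $2a$ and $4a$, yielding
\begin{equation*}
t^{2a/d}\,|\Delta_t(x_1,x_1+t^{-1/d}z)| \leq C\exp(-c\|z\|^d)
\end{equation*}
for constants $C,c\in(0,\infty)$ independent of $t\geq 1$ and $x_1\in W$. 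This dominator is integrable on $\R^d$, so dominated convergence applies and completes the proof.
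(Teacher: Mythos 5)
Your proof takes essentially the same route as the paper's: Mecke decomposition into a diagonal term and an off-diagonal covariance term, the substitution $x_2=x_1+t^{-1/d}z$, pointwise convergence of the rescaled integrand, and a $t$-uniform integrable dominator obtained by decoupling on good events (your $E_1\cap E_2$ is exactly the paper's $A_1^C\cap A_2^C$, your $\tilde\ell_i$ is the paper's $\ell(x_i,\eta_t+\delta_{x_i})\mathbf 1_{A_i^C}$). The only place your sketch is thinner than the paper's is the pointwise convergence of the covariance integrand, which you summarize as ``invoking the same local-geometry rescaling as above'': the paper handles this by passing to the distributional identity $t^{1/d}\eta_t\overset{d}{=}(t^{1/d}x_1+\eta)|_{t^{1/d}W}$, establishing almost-sure convergence of the rescaled edge lengths, and running a \emph{second}, inner dominated convergence whose majorants come from a cone construction $K_{x_1}\subset W$ --- a step that genuinely requires an argument and is not a direct analogue of the single-tail computation used for the diagonal term.
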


\begin{proof}
For $a=0$, $t^{2a/d-1}\BV[\cL_t^{(a)}]=t^{-1}\BV[\eta_t(W)]=t^{-1}\BE[\eta_t(W)]=\lambda_d(W)$ since $\eta_t(W)$ is a Poisson random variable with mean $t\lambda_d(W)$. Hence, $v_0=1$ and we can and will from now on restrict to the case $a>0$, where we re-write the variance of $\mathcal{L}_t^{(a)}$ as
$$
\BV[\mathcal{L}^{(a)}_t] = \BE\Big[\sum_{(x,y)\in\eta^2_{t,\neq}} \ell(x,\eta_t)^a \ell(y,\eta_t)^a\Big]-\Big(\BE\Big[\sum_{x\in\eta_t} \ell(x,\eta_t)^{a}\,\Big]\Big)^2+\BE\Big[\sum_{x\in\eta_t} \ell(x,\eta_t)^{2a}\Big]\,.
$$
Now, we use the multivariate Mecke equation \eqref{eq:MeckeMulti} to deduce that $$t^{2a/d-1}\BV[\cL_t^{(a)}] = T_1(t)+T_2(t)$$ with $T_1(t)$ and $T_2(t)$ given by
\begin{align*}
T_1(t) &:= t\int_W\int_W t^{2a/d} \, \BE[\ell(x,\eta_t+\delta_x+\delta_y)^a \, \ell(y,\eta_t+\delta_x+\delta_y)^a]\\
&\qquad\qquad\qquad\qquad -t^{2a/d}\,\BE[\ell(x,\eta_t+\delta_x)^a] \, \BE[\ell(y,\eta_t+\delta_y)^a]\,\dint y \, \dint x\,,\\
T_2(t) &:= \int_W t^{2a/d}\,\BE[\ell(x,\eta_t+\delta_x)^{2a}]\,\dint x\,.
\end{align*}
It follows from $T_2(t)=t^{2a/d-1}\BE[\mathcal{L}^{(2a)}_t]$, the expectation asymptotics \eqref{eq:ExpectationAsymptotics} and \eqref{eq:Expectationellea} that
\begin{equation}\label{eq:limit3}
\lim_{t\to\infty} T_2(t) = \lim_{t\to\infty} t^{2a/d-1}\BE[\mathcal{L}^{(2a)}_t] =  \BE[\ell_e(0,\eta+\delta_0)^{2a}] \lambda_d(W)\,.
\end{equation}
Using the substitution $y=x+t^{-1/d}z$, we re-write $T_{1}(t)$ as
\begin{equation}\label{eq:T1}
\begin{split}
T_{1}(t) & = \int_W\int_{\R^d} \I\{x+t^{-1/d}z\in W\}\\
& \hskip 1.5cm \times \Big(t^{2a/d} \, \BE[\ell(x,\eta_t+\delta_x+\delta_{x+t^{-1/d}z})^a \ell(x+t^{-1/d}z,\eta_t+\delta_x+\delta_{x+t^{-1/d}z})^a]\\
& \hskip 2.15cm -t^{2a/d} \, \BE[\ell(x,\eta_t+\delta_x)^a]\,\BE[\ell(x+t^{-1/d}z,\eta_t+\delta_{x+t^{-1/d}z})^a]\Big) \,\dint z \, \dint x\,.
\end{split}
\end{equation}
Let $A$ be the union of the events
$$
A_1=\{\eta_t(B^d(x,t^{-1/d}\|z\|/2)\cap B^d(0,\|x\|))=0\}
$$
and
$$
A_2=\{\eta_t(B^d(x+t^{-1/d}z,t^{-1/d}\|z\|/2)\cap B^d(0,\|x+t^{-1/d}z\|))=0\}\,.
$$
Since the expectation factorizes on the complement $A^C$ of $A$ by independence, we have that
\begin{align*}
& \BE[\ell(x,\eta_t+\delta_x+\delta_{x+t^{-1/d}z})^a \, \ell(x+t^{-1/d}z,\eta_t+\delta_x+\delta_{x+t^{-1/d}z})^a\,{\bf 1}_{A^C}]\\
&= \BE[\ell(x,\eta_t+\delta_x)^a\,{\bf 1}_{A_1^C}] \, \BE[\ell(x+t^{-1/d}z,\eta_t+\delta_{x+t^{-1/d}z})^a\,{\bf 1}_{A_2^C}]\,.
\end{align*}
It follows from Lemma \ref{lem:KonstanteAlphad} that
$$
\BP(A_1)\leq \exp(-\alpha_W\kappa_d\|z\|^d/2^d)\,, \qquad \BP(A_2)\leq \exp(-\alpha_W\kappa_d\|z\|^d/2^d)
$$
and hence
$$
\BP(A) \leq 2 \exp(-\alpha_W\kappa_d\|z\|^d/2^d)\,.
$$
Thus, using \eqref{eq:monotone}, repeatedly the Cauchy-Schwarz inequality and Lemma \ref{lem:ExpTail}, we see that the integrand of $T_{1}(t)$ is bounded in absolute value from above by
\begin{align*}
& t^{2a/d} \, \BE[\ell(x,\eta_t+\delta_x+\delta_{x+t^{-1/d}z})^a \, \ell(x+t^{-1/d}z,\eta_t+\delta_x+\delta_{x+t^{-1/d}z})^a\,{\bf 1}_{A}]\\
& \quad + t^{2a/d} \, \BE[\ell(x,\eta_t+\delta_x)^a\,{\bf 1}_{A_1}] \, \BE[\ell(x+t^{-1/d}z,\eta_t+\delta_{x+t^{-1/d}z})^a]\\
& \quad + t^{2a/d} \, \BE[\ell(x,\eta_t+\delta_x)^a] \, \BE[\ell(x+t^{-1/d}z,\eta_t+\delta_{x+t^{-1/d}z})^a\,{\bf 1}_{A_2}]\\
& \leq \BE[t^{4a/d}\ell(x,\eta_t+\delta_x)^{4a}]^{1/4} \, \BE[t^{4a/d}\ell(x+t^{-1/d}z,\eta_t+\delta_{x+t^{-1/d}z})^{4a}]^{1/4}\\
& \quad\quad \times\sqrt{2} \exp(-\alpha_W\kappa_d\|z\|^d/2^{d+1})\\
& \quad +2\BE[t^{2a/d}\ell(x,\eta_t+\delta_x)^{2a}]^{1/2} \, \BE[t^{2a/d}\ell(x+t^{-1/d}z,\eta_t+\delta_{x+t^{-1/d}z})^{2a}]^{1/2} \\
& \quad\quad \times\exp(-\alpha_W\kappa_d\|z\|^d/2^{d+1})\\
& \leq  (\sqrt{2c_{4a}}+2c_{2a})\exp(-\alpha_W \kappa_d \|z\|^d/2^{d+1})\,.
\end{align*}
As a consequence, the integrand in \eqref{eq:T1} is bounded by an integrable function. We can thus apply the dominated convergence theorem and obtain that
\begin{align*}
\lim_{t\to\infty} T_{1}(t) \!& =\!\!  \int_W\int_{\R^d} \lim_{t\to\infty} \I\{x+t^{-1/d}z\in W\}\\
& \hskip 2.1cm \times\Big(t^{2a/d}\,\BE[\ell(x,\eta_t+\delta_x+\delta_{x+t^{-1/d}z})^a \, \ell(x+t^{-1/d}z,\eta_t+\delta_x+\delta_{x+t^{-1/d}z})^a]\\
& \hskip 2.75cm -t^{2a/d}\,\BE[\ell(x,\eta_t+\delta_x)^a]\,\BE[\ell(x+t^{-1/d}z,\eta_t+\delta_{x+t^{-1/d}z})^a]\Big) \,\dint z \, \dint x\,.
\end{align*}
Using the homogeneity relation \eqref{eq:homogene} and the fact that $t^{1/d}\eta_t$ has the same distribution as the restriction $(t^{1/d}x+\eta)|_{t^{1/d}W}$ of the translated Poisson point process $t^{1/d}x+\eta$ to $t^{1/d}W$, we see that
\begin{align*}
&t^{2a/d}\,\BE[\ell(x,\eta_t+\delta_x+\delta_{x+t^{-1/d}z})^a \, \ell(x+t^{-1/d}z,\eta_t+\delta_x+\delta_{x+t^{-1/d}z})^a]\\
& \qquad -t^{2a/d}\,\BE[\ell(x,\eta_t+\delta_x)^a] \, \BE[\ell(x+t^{-1/d}z,\eta_t+\delta_{x+t^{-1/d}z})^a]\\
& = \BE[\ell(t^{1/d}x,t^{1/d}\eta_t+\delta_{t^{1/d}x}+\delta_{t^{1/d}x+z})^a \, \ell(t^{1/d}x+z,t^{1/d}\eta_t+\delta_{t^{1/d}x}+\delta_{t^{1/d}x+z})^a]\\
& \qquad - \BE[\ell(t^{1/d}x,t^{1/d}\eta_t+\delta_{t^{1/d}x})^a] \, \BE[\ell(t^{1/d}x+z,t^{1/d}\eta_t+\delta_{t^{1/d}x+z})^a]\\
& = \BE[\ell(t^{1/d}x,(t^{1/d}x+\eta)|_{t^{1/d}W}+\delta_{t^{1/d}x}+\delta_{t^{1/d}x+z})^a\\
& \qquad\qquad\qquad \times\ell(t^{1/d}x+z,(t^{1/d}x+\eta)|_{t^{1/d}W}+\delta_{t^{1/d}x}+\delta_{t^{1/d}x+z})^a]\\
& \qquad - \BE[\ell(t^{1/d}x,(t^{1/d}x+\eta)|_{t^{1/d}W}+\delta_{t^{1/d}x})^a]\,\BE[\ell(t^{1/d}x+z,(t^{1/d}x+\eta)|_{t^{1/d}W}+\delta_{t^{1/d}x+z})^a]\,.
\end{align*}
It is not hard to verify that, for all $x\in\interior{(W)}$ with $x\neq0$ and $z\in\R^d$, the relations
\begin{align}
& \lim_{t\to\infty} \ell(t^{1/d}x,(t^{1/d}x+\eta)|_{t^{1/d}W}+\delta_{t^{1/d}x}+\delta_{t^{1/d}x+z}) = \ell_{x/\|x\|}(0,\eta+\delta_{0}+\delta_{z}) \label{eq:limit1}\\
& \lim_{t\to\infty} \ell(t^{1/d}x+z,(t^{1/d}x+\eta)|_{t^{1/d}W}+\delta_{t^{1/d}x}+\delta_{t^{1/d}x+z}) = \ell_{x/\|x\|}(z,\eta+\delta_{0}+\delta_{z}) \\
& \lim_{t\to\infty} \ell(t^{1/d}x,(t^{1/d}x+\eta)|_{t^{1/d}W}+\delta_{t^{1/d}x}) = \ell_{x/\|x\|}(0,\eta+\delta_0) \\
& \lim_{t\to\infty} \ell(t^{1/d}x+z,(t^{1/d}x+\eta)|_{t^{1/d}W}+\delta_{t^{1/d}x+z}) = \ell_{x/\|x\|}(z,\eta+\delta_z)\label{eq:limit2}
\end{align}
hold with probability one. In order to apply the dominated convergence theorem again, we have to find integrable majorants for the terms on the left-hand sides of \eqref{eq:limit1}--\eqref{eq:limit2}. First note that by the monotonicity relation \eqref{eq:monotone} we have that
$$
\ell(t^{1/d}x,(t^{1/d}x+\eta)|_{t^{1/d}W}+\delta_{t^{1/d}x}+\delta_{t^{1/d}x+z})\leq \ell(t^{1/d}x,(t^{1/d}x+\eta)|_{t^{1/d}W}+\delta_{t^{1/d}x})
$$
and
\begin{align*}
&\ell(t^{1/d}x+z,(t^{1/d}x+\eta)|_{t^{1/d}W}+\delta_{t^{1/d}x}+\delta_{t^{1/d}x+z})\\
&\qquad\qquad  \leq \ell(t^{1/d}x+z,(t^{1/d}x+\eta)|_{t^{1/d}W}+\delta_{t^{1/d}x+z})\,.
\end{align*}
For each $x\in\interior{(W)}$ we can construct a cone $K_x$ such that, for $t\geq 1$, a point $y\in t^{1/d} x+K_x$ is also in $t^{1/d}W$ if $\|y-t^{1/d}x\|\leq t^{1/d}\|x\|/2$. This can be done by choosing a point $p_x$ on the line between $x$ and $0$, which is sufficiently close to the origin $0$. Since this will always be an interior point of $W$, we find a ball $B^d(p_x,r_x)$ of a certain radius $r_x>0$ and centred at $p_x$, which is completely contained in $W$ and generates $K_x$ together with $x$. That is, $K_x$ is the smallest cone with apex $x$ containing $B^d(p_x,r_x)$. Then,
\begin{equation}\label{eq:BoundStationary1}
\ell(t^{1/d}x,(t^{1/d}x+\eta)|_{t^{1/d}W}+\delta_{t^{1/d}x})\leq 2\min_{y\in\eta\cap K_x}\|y\|\,.
\end{equation}
If the opening angle of the cone $K_x$ is sufficiently small, we have that $\|y\|\leq \|t^{1/d}x+z\|$ if $y\in t^{1/d}x +K_x$ and $\|y-t^{1/d}x\|\geq 2\|z\|$. Thus, it holds that
\begin{equation}\label{eq:BoundStationary2}
\ell(t^{1/d}x+z,(t^{1/d}x+\eta)|_{t^{1/d}W}+\delta_{t^{1/d}x+z}) \leq 2 \min_{y\in\eta\cap K_x, \|y\|\geq 2\|z\|}\|y\|\,.
\end{equation}
It follows from the fact that $\eta$ is a stationary Poisson point process with intensity one that
$$
\BP(\min_{y\in\eta\cap K_x, \|y\|\geq 2\|z\|}\|y\|\geq u) = \exp\big(-\lambda_d(K_x\cap B^d(0,1))(u^d-2^d\|z\|^d)\big)
$$
for $u\geq 2\|z\|$. Hence, the right-hand sides in \eqref{eq:BoundStationary1} and \eqref{eq:BoundStationary2} are integrable majorants for the expressions on the left-hand sides in \eqref{eq:limit1}--\eqref{eq:limit2}. Thus, we obtain by the dominated convergence theorem that
\begin{align*}
& \lim_{t\to\infty} \I\{x+t^{-1/d}z\in W\} t^{2a/d}\Big(\BE[\ell(x,\eta_t+\delta_x+\delta_{x+t^{-1/d}z})^a \ell(x+t^{-1/d}z,\eta_t+\delta_x+\delta_{x+t^{-1/d}z})^a]\\
& \hskip 5.5cm -\BE[\ell(x,\eta_t+\delta_x)^a]\,\BE[\ell(x+t^{-1/d}z,\eta_t+\delta_{x+t^{-1/d}z})^a]\Big)\\
& = \BE[\ell_{x/\|x\|}(0,\eta+\delta_0+\delta_z)^a \, \ell_{x/\|x\|}(z,\eta+\delta_0+\delta_z)^a] \\
&\qquad\qquad\qquad\qquad- \BE[\ell_{x/\|x\|}(0,\eta+\delta_0)^a]\,\BE[\ell_{x/\|x\|}(z,\eta+\delta_{z})^a]\,.
\end{align*}
Together with the fact that the expectations in the previous term are independent of the choice of $x\in\R^d$, we see that
\begin{align*}
\lim_{t\to\infty}T_1(t) = \lambda_d(W) \int_{\R^d} & \BE[\ell_e(0,\eta+\delta_0+\delta_z)^a \, \ell_e(z,\eta+\delta_0+\delta_z)^a]\\
 &\qquad\qquad - \BE[\ell_e(0,\eta+\delta_0)^a]\,\BE[\ell_e(z,\eta+\delta_{z})^a] \, \dint z\,,
\end{align*}
which together with \eqref{eq:limit3} concludes the proof.
\end{proof}

After having established the existence of the variance limit in \eqref{eq:VarianceAsymptotics}, we need to show that it is bounded away from zero. As a first step, the next lemma relates the asymptotic variance of $\mathcal{L}_t^{(a)}$, as $t\to\infty$, to that of $\widehat{\mathcal{L}}_{B^d(0,r)}^{(a)}$, as $r\to\infty$. Recall \eqref{eq:widehatLW} for the definition of $\widehat{\mathcal{L}}_{B^d(0,r)}^{(a)}$ and note that $\widehat{\mathcal{L}}_{B^d(0,r)}^{(a)}$ depends on a direction $e\in\mathbb{S}^{d-1}$, which is suppressed in our notation.

\begin{lemma}\label{lem:VarianceDSF}
For $a\geq 0$,
$$
\lim\limits_{r\to\infty} \frac{\BV[\widehat{\mathcal{L}}_{B^d(0,r)}^{(a)}]}{\kappa_d r^d}=v_a\,,
$$
where $v_a$ is the constant from Lemma \ref{lem:ExistenceVarianceAsymptotics}.
\end{lemma}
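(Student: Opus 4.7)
The plan is to apply the multivariate Mecke variance decomposition directly to the directed spanning forest functional and exploit the translation invariance of $\eta$, which is much cleaner than the radial set-up because no boundary or $t$-dependence is present. The case $a = 0$ is trivial: $\widehat{\cL}_{B^d(0,r)}^{(0)} = \eta(B^d(0,r))$ is Poisson with mean $\kappa_d r^d$, so the ratio equals $1 = v_0$. Assume $a > 0$ henceforth.

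By \eqref{eq:MeckeMulti}, write $\BV[\widehat{\cL}_{B^d(0,r)}^{(a)}] = \widehat T_1(r) + \widehat T_2(r)$ with
$$\widehat T_2(r) = \int_{B^d(0,r)} \BE[\ell_e(x, \eta + \delta_x)^{2a}]\, \dint x$$
and $\widehat T_1(r)$ equal to the double integral over $B^d(0,r)^2$ of
$$\BE[\ell_e(x, \eta + \delta_x + \delta_y)^a\, \ell_e(y, \eta + \delta_x + \delta_y)^a] - \BE[\ell_e(x, \eta + \delta_x)^a]\, \BE[\ell_e(y, \eta + \delta_y)^a].$$
By stationarity of $\eta$ these expectations depend only on $z := y - x$; substituting $y = x + z$ and applying Fubini, one obtains $\widehat T_2(r) = \kappa_d r^d\, \BE[\ell_e(0, \eta + \delta_0)^{2a}]$ and
$$\widehat T_1(r) = \int_{\R^d} g(z)\, \lambda_d(B^d(0,r) \cap (B^d(0,r) - z))\, \dint z,$$
where $g(z)$ is exactly the integrand of the integral defining $v_a$ in Lemma \ref{lem:ExistenceVarianceAsymptotics}. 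Upon dividing by $\kappa_d r^d$, the kernel $\lambda_d(B^d(0,r) \cap (B^d(0,r) - z)) / (\kappa_d r^d)$ is bounded by $1$ and converges pointwise to $1$ for every fixed $z$ as $r \to \infty$. Provided $g \in L^1(\R^d)$, dominated convergence yields $\widehat T_1(r) / (\kappa_d r^d) \to \int_{\R^d} g(z)\, \dint z$, and combining this with the $\widehat T_2$ contribution gives $v_a$ as desired.

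The main obstacle, and the only step that is not routine, is the integrability of $g$. The plan is to reuse the truncation-and-independence idea from the proof of Lemma \ref{lem:ExistenceVarianceAsymptotics}. Set
$$A_1 := \{\eta(B^d(0, \|z\|/2) \cap H_{0,e}) = 0\}, \quad A_2 := \{\eta(B^d(z, \|z\|/2) \cap H_{z,e}) = 0\}, \quad A := A_1 \cup A_2.$$
On $A_1^c$ the DSF-neighbour of $0$ in $\eta + \delta_0 + \delta_z$ lies within $B^d(0, \|z\|/2) \cap H_{0,e}$ at distance strictly less than $\|z\|$, so the extra point $\delta_z$ is irrelevant and $\ell_e(0, \eta + \delta_0 + \delta_z) = \ell_e(0, \eta + \delta_0)$ is measurable with respect to $\eta \cap B^d(0, \|z\|/2)$; symmetrically on $A_2^c$. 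Since the two half-balls are disjoint, the Poisson property forces
$$\BE[\ell_e(0, \eta + \delta_0 + \delta_z)^a\, \ell_e(z, \eta + \delta_0 + \delta_z)^a\, \I_{A^c}] = \BE[\ell_e(0, \eta + \delta_0)^a\, \I_{A_1^c}]\, \BE[\ell_e(z, \eta + \delta_z)^a\, \I_{A_2^c}].$$
Subtracting this identity from the defining expression of $g(z)$ leaves only $A$-contributions, which can be bounded using Cauchy--Schwarz together with the exponential tail $\BP(\ell_e(0, \eta + \delta_0) \geq u) \leq \exp(-\kappa_d u^d/2)$ (so $\ell_e(0, \eta + \delta_0)$ has finite moments of all orders) and the Poisson estimate $\BP(A) \leq 2 \exp(-\kappa_d \|z\|^d / 2^{d+1})$. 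The outcome is $|g(z)| \leq C \exp(-c \|z\|^d)$ with constants $C, c > 0$ depending only on $a$ and $d$, which is integrable on $\R^d$.

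In summary, the proof reduces to (i) a Mecke decomposition that is trivialised by stationarity into a volume-of-intersection computation, and (ii) an exponential bound on $g$ identical in spirit to the one already carried out for the radial spanning tree, but technically easier because of full translation invariance of $\eta$.
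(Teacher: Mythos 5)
Your proposal is correct and follows essentially the same route as the paper: Mecke decomposition into a one-point term $\widehat T_2(r)$ that becomes exactly $\kappa_d r^d\,\BE[\ell_e(0,\eta+\delta_0)^{2a}]$ by stationarity, a two-point term $\widehat T_1(r)$ rewritten via the substitution $y = x+z$, dominated convergence against the normalised intersection-volume kernel $\lambda_d(B^d(0,r)\cap(B^d(0,r)-z))/(\kappa_d r^d)$, and the exponential decay of the two-point correlation via truncation on the independence events $A_1, A_2$ combined with Cauchy--Schwarz. The only cosmetic difference is your expression of the kernel as a convolution $\lambda_d(B^d(0,r)\cap(B^d(0,r)-z))$ rather than the paper's rescaled form $\int_{B^d(0,1)}\I\{y\in B^d(-rx,r)\}\,\dint x$ (and your use of half-balls $B^d(\cdot,\|z\|/2)\cap H_{\cdot,e}$ in place of full balls for $A_1,A_2$, which only changes the constant in the exponent); both are equivalent and the argument is the same.
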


\begin{proof}
For $a=0$, $\widehat{\mathcal{L}}_{B^d(0,r)}^{(a)}$ is a Poisson random variable with mean $\lambda_d(B^d(0,r))=\kappa_dr^d$ and the statement is thus satisfied with $v_0=1$. So, we may assume that $a>0$. It follows from the multivariate Mecke equation \eqref{eq:MeckeMulti} and the same argument as at the beginning of the proof of Lemma \ref{lem:ExistenceVarianceAsymptotics} that
$$
\BV[\widehat{\cL}_{B^d(0,r)}^{(a)}] = T_{1,r}+T_{2,r}
$$
with $T_{1,r}$ and $T_{2,r}$ given by
\begin{align*}
T_{1,r} &:= \int_{B^d(0,r)}\int_{\mathbb{R}^d} \I\{y\in B^d(0,r)\} \Big(\BE[\ell_e(x,\eta+\delta_x+\delta_y)^a \, \ell_e(y,\eta+\delta_x+\delta_y)^a]\\
&\qquad\qquad\qquad\qquad\qquad\qquad\qquad -\BE[\ell_e(x,\eta+\delta_x)^a]\,\BE[\ell_e(y,\eta+\delta_y)^a]\Big) \,\dint y \, \dint x\,,\\
T_{2,r} &:= \int_{B^d(0,r)} \BE[\ell_e(x,\eta+\delta_x)^{2a}]\,\dint x\,.
\end{align*}
By the translation invariance of $\ell_e$ and $\eta$ we have that
\begin{align*}
T_{1,r} &:= \int_{B^d(0,r)}\int_{\mathbb{R}^d} \I\{y-x\in B^d(-x,r)\}\\
& \hskip 2.5cm  \Big(\BE[\ell_e(0,\eta+\delta_0+\delta_{y-x})^a \, \ell_e(y-x,\eta+\delta_0+\delta_{y-x})^a]\\
& \hskip 2.75cm -\BE[\ell_e(0,\eta+\delta_{0})^a]\,\BE[\ell_e(y-x,\eta+\delta_{y-x})^a]\Big) \,\dint y \, \dint x \allowdisplaybreaks\\
&= \int_{B^d(0,r)}\int_{\mathbb{R}^d} \I\{y\in B^d(-x,r)\} \Big(\BE[\ell_e(0,\eta+\delta_0+\delta_y)^a \, \ell_e(y,\eta+\delta_0+\delta_y)^a]\\
& \hskip 6cm -\BE[\ell_e(0,\eta+\delta_0)^a]\,\BE[\ell_e(y,\eta+\delta_y)^a]\Big) \,\dint y \, \dint x \allowdisplaybreaks\\
&= r^d \int_{B^d(0,1)}\int_{\mathbb{R}^d} \I\{y\in B^d(-rx,r)\} \Big(\BE[\ell_e(0,\eta+\delta_0+\delta_y)^a \, \ell_e(y,\eta+\delta_0+\delta_y)^a]\\
& \hskip 6.5cm -\BE[\ell_e(0,\eta+\delta_0)^a]\,\BE[\ell_e(y,\eta+\delta_y)^a]\Big) \,\dint y \, \dint x\,.
\end{align*}
Let $y\in\R^d$ and let $A_1$ and $A_2$ be the events
$$A_1:=\{\eta(B^d(0,\|y\|/2))=0\}\qquad\text{and}\qquad A_2:=\{\eta(B^d(y,\|y\|/2))=0\}\,,$$
respectively. On $(A_1\cup A_2)^C$ the expectation factorizes so that
\begin{align*}
&\BE[\ell_e(0,\eta+\delta_0+\delta_y)^a \, \ell_e(y,\eta+\delta_0+\delta_y)^a \I_{(A_1\cup A_2)^C}] \\
&\qquad\qquad= \BE[\ell_e(0,\eta+\delta_0)^a \I_{A_1^C}]\,\BE[\ell_e(y,\eta+\delta_y)^a \I_{A_2^C}]\,.
\end{align*}
Together with the Cauchy-Schwarz inequality and \eqref{eq:monotoneelle}, this implies that
\begin{align*}
& \big|\BE[\ell_e(0,\eta+\delta_0+\delta_y)^a \, \ell_e(y,\eta+\delta_0+\delta_y)^a] -\BE[\ell_e(0,\eta+\delta_0)^a]\,\BE[\ell_e(y,\eta+\delta_y)^a]\big|\\
& \leq \BE[\ell_e(0,\eta+\delta_0+\delta_y)^a \, \ell_e(y,\eta+\delta_0+\delta_y)^a \I_{(A_1\cup A_2)}]\\
& \quad +\BE[\ell_e(0,\eta+\delta_0)^a \I_{A_1}]\,\BE[\ell_e(y,\eta+\delta_y)^a]+\BE[\ell_e(0,\eta+\delta_0)^a]\,\BE[\ell_e(y,\eta+\delta_y)^a \I_{A_2}]\\
& \leq \BE[\ell_e(0,\eta+\delta_0)^{4a}]^{1/4} \,  \BE[\ell_e(y,\eta+\delta_y)^{4a}]^{1/4} \, \BP(A_1\cup A_2)^{1/2}\\
& \quad +\BE[\ell_e(0,\eta+\delta_0)^{2a}]^{1/2} \, \BE[\ell_e(y,\eta+\delta_y)^{2a}]^{1/2} \, (\BP(A_1)^{1/2}+\BP(A_2)^{1/2})\,.
\end{align*}
Since $\BP(\ell_e(0,\eta+\delta_0)\geq u) = \exp(-\kappa_d u^d/2)$ for $u\geq 0$, all moments of $\ell_e(0,\eta+\delta_0)$ are finite. Moreover, we have that
$$
\BP(A_1)=\BP(A_2)=\exp(-\kappa_d \|y\|^d/2^d) \quad \text{ and } \quad \BP(A_1\cup A_2)\leq 2\exp(-\kappa_d \|y\|^d/2^d)\,.
$$
Hence, there is a constant $C_a\in(0,\infty)$ such that
\begin{align*}
& \big|\BE[\ell_e(0,\eta+\delta_0+\delta_y)^a \, \ell_e(y,\eta+\delta_0+\delta_y)^a] -\BE[\ell_e(0,\eta+\delta_0)^a]\,\BE[\ell_e(y,\eta+\delta_y)^a]\big|\\
&\qquad \leq C_a \exp(-\kappa_d \|y\|^d/2^{d+1})\,.
\end{align*}
This allows us to apply the dominated convergence theorem, which yields
\begin{align*}
\lim_{r\to\infty} \frac{T_{1,r}}{\kappa_d r^d} &= \int_{\R^d} \BE[\ell_e(0,\eta+\delta_0+\delta_y)^a  \ell_e(y,\eta+\delta_0+\delta_y)^a]\\
&\qquad\qquad\qquad - \BE[\ell_e(0,\eta+\delta_0)^a ] \, \BE[\ell_e(y,\eta+\delta_y)^a ] \, \dint y\,.
\end{align*}
On the other hand we have that
$$T_{2,r}=\kappa_d r^d\,\BE[\ell_e(0,\eta+\delta_0)^{2a}]\,.$$
This completes the proof.
\end{proof}

Lemma \ref{lem:ExistenceVarianceAsymptotics} and Lemma \ref{lem:VarianceDSF} show that the suitably normalized functionals $\cL_t^{(a)}$ and $\widehat{\cL}_{B^d(0,r)}^{(a)}$ have the same limiting variance $v_a$, as $t\to\infty$ or $r\to\infty$, respectively. To complete the proof of Theorem \ref{thm:Variance} it remains to show that $v_a$ is strictly positive. Our preceding observation shows that for this only an analysis of the directed spanning forest ${\rm DSF}(\eta)$ of a stationary Poisson point process $\eta$ in $\R^d$ with intensity $1$ is necessary. Compared to the radial spanning tree and as discussed already in the introduction, the advantage of this model is that its construction is homogeneous in space, as it is based on the notion of half-spaces. This makes it much easier to analyse. Recall the definition \eqref{eq:widehatLW} of $\widehat{\cL}_W^{(a)}$ and note that it depends on a previously chosen direction $e\in\mathbb{S}^{d-1}$.

\begin{lemma}\label{lem:VarianceStrictlyPositive}
For all $e\in\mathbb{S}^{d-1}$ and $a\geq 0$ one has that $\BV[\widehat{\cL}^{(a)}_W]\geq \hat{v}_{a} \lambda_d(W)$ with a constant $\hat{v}_a\in(0,\infty)$ only depending on $a$, $d$ and
$$
r_W:=\sup\{u\in(0,1]: \lambda_d(\{x\in W: B^d(x,\max\{2,\sqrt{d}\}u)\subset W\})\geq \lambda_d(W)/2\}\,.
$$
In particular, the constants $v_a$, $a\geq 0$, defined in Lemma \ref{lem:ExistenceVarianceAsymptotics} are strictly positive.
\end{lemma}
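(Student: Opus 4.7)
For $a=0$ the claim is immediate: $\widehat{\cL}_W^{(0)} = \eta(W)$ is Poisson with parameter $\lambda_d(W)$, whence $\BV[\widehat{\cL}_W^{(0)}] = \lambda_d(W)$ and one may take $\hat v_0 = 1$. Once the lemma is established for $a > 0$ as well, the positivity of $v_a$ in Lemma~\ref{lem:ExistenceVarianceAsymptotics} follows from Lemma~\ref{lem:VarianceDSF}, since $v_a\,\kappa_d r^d = \lim_{r \to \infty} \BV[\widehat{\cL}_{B^d(0,r)}^{(a)}] \geq \hat v_a \kappa_d r^d$ for every $r$ forces $v_a \geq \hat v_a > 0$.

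For $a > 0$ the plan is to invoke Proposition~\ref{prop:LPSvariance} with the stationary unit-intensity Poisson process $\eta$ and the functional $\widehat{\cL}_W^{(a)}$. The outer set is chosen as
$$W_0 := \{x \in W : B^d(x, \max\{2, \sqrt d\}\, u) \subset W\}$$
for some $u \in (0, r_W)$ close to $r_W$; by the very definition of $r_W$ this guarantees $\lambda_d(W_0) \geq \lambda_d(W)/2$. Once the hypothesis of Proposition~\ref{prop:LPSvariance} is verified, its conclusion reads $\BV[\widehat{\cL}_W^{(a)}] \geq v\, \lambda_d(W_0) \geq (v/2)\, \lambda_d(W)$, so the constant $\hat v_a := v/2$ does the job. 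The dependence of $\hat v_a$ on $W$ enters only through $r_W$, as required.

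The substance of the proof lies in constructing $k$, $I_1, I_2 \subset \{1,\dots,k\}$ with $I_1 \cup I_2 = \{1,\dots,k\}$, bounded sets $A_1,\dots,A_k$, and $c > 0$ for which the expected discrimination in Proposition~\ref{prop:LPSvariance} is at least $c$. I propose taking $k=1$, $I_1 = \emptyset$, $I_2 = \{1\}$, and $A_1 = B^d(-\rho e, \rho/4)$ for a fixed $\rho > 0$ chosen small enough that $x + y_1 \in W$ whenever $x \in W_0$ and $y_1 \in A_1$. Each such $y_1$ then satisfies $\langle e, y_1\rangle \leq -3\rho/4$, so that $y_1 \in H_{0,e}$ and $x \notin H_{x+y_1,e}$; this keeps the contribution of $x+y_1$ to $f$ independent of the test atom $\delta_x$. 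Writing $f(\eta+\delta_x+\delta_{x+y_1}) - f(\eta+\delta_x)$ as the sum of the new contribution $\ell_e(x+y_1,\cdot)^a$, the change in $\ell_e(x,\cdot)^a$ (which on the event that $\eta$ has no points in $B^d(x,\|y_1\|)\cap H_{x,e}$ equals exactly $\|y_1\|^a - \ell_e(x,\eta+\delta_x)^a$), and the Mecke-sum of the changes in $\ell_e(z,\cdot)^a$ over $z \in \eta \cap W$, one takes expectations and uses the stationarity of $\eta$ together with the explicit tail $\BP(\ell_e(0,\eta+\delta_0) \geq t) = \exp(-\kappa_d t^d/2)$ to evaluate or bound every piece in closed form. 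All contributions are $x$-independent by translation invariance, and they depend on $y_1$ only through $\|y_1\|$.

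The main obstacle is to check that these three pieces do not add up to zero. In the degenerate setting $d=1$, $a=1$ (outside the scope of the paper) exact cancellation does occur, which mirrors the sub-linear growth of $\BV[\widehat{\cL}_{[0,r]}^{(1)}]$ in that case; in all other cases, however, an explicit computation using the above tail identity yields a non-zero expression, strictly negative for $a > 1$ (dominated by the Mecke decrease of $\ell_e(z,\cdot)^a$ for $z$'s lying in the ``wake'' of $x+y_1$) and strictly positive for $a < 1$ (dominated by the new short edge at $x+y_1$). Should the $k=1$ construction fail for an exceptional value of $a$, I substitute $k=2$, $I_1 = \{1\}$, $I_2 = \{2\}$ with $A_1, A_2$ two balls at distinct radii in $H_{0,e}$, which detects a positive gap through the strict monotonicity of $\rho \mapsto \BE[\min(\ell_e(0,\eta+\delta_0), \rho)^a]$ and is immune to the above cancellation.
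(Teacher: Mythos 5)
Your outer framework is right and matches the paper: invoke Proposition~\ref{prop:LPSvariance} with $W_0 := \{x\in W : B^d(x,\max\{2,\sqrt d\}\,r_W)\subset W\}$, note $\lambda_d(W_0)\geq\lambda_d(W)/2$ by the definition of $r_W$, handle $a=0$ via the Poisson variance, and deduce positivity of $v_a$ from Lemma~\ref{lem:VarianceDSF}. The gap lies in the substance: the non-degeneracy bound itself. Your primary $k=1$ construction compares $\BE[f(\eta+\delta_x+\delta_{x+y_1})]$ with $\BE[f(\eta+\delta_x)]$, and the expected increment really can vanish. Ignoring the exponentially small boundary error and the presence of $\delta_x$ in the base process, the expected change from adding a single point to the unit-intensity $\operatorname{DSF}$ functional decomposes, with $L:=\ell_e(0,\eta+\delta_0)$, into the new edge $\BE[L^a]$ plus the Mecke integral of the shortenings, which evaluates to $-\tfrac{\kappa_d a}{2(a+d)}\BE[L^{a+d}]$; using $\BP(L\geq u)=\exp(-\kappa_d u^d/2)$ one finds the sum equals $(2/\kappa_d)^{a/d}\Gamma(1+a/d)(1-a/d)$, which is identically zero at $a=d$ and changes sign there. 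With the extra atom $\delta_x$ one picks up the additional (strictly negative) term $\BE[\min(L,\|y_1\|)^a-L^a]$, so the total increment is a smooth function of $a$ and of $\rho$ that starts positive near $a=0$ and ends negative for $a>d$; for each $a\in(0,d)$ there is a $\rho$ at which it crosses zero, and the sign analysis you give (threshold at $a=1$) is not what the computation shows. To make $k=1$ work you would have to fix $\rho$ as a function of $a$ and prove a uniform quantitative lower bound over $y_1\in A_1$; that is precisely what you have not done, and you concede it (``should the $k=1$ construction fail\dots''). The fallback $k=2$, $I_1=\{1\}$, $I_2=\{2\}$ is also not safe: in the difference $\BE[f(\eta+\delta_x+\delta_{x+y_1})-f(\eta+\delta_x+\delta_{x+y_2})]$ the new-edge contributions cancel exactly by stationarity, the Mecke contributions agree up to exponentially small boundary terms, and the only robust signal is $\BE[\min(L,\|y_1\|)^a]-\BE[\min(L,\|y_2\|)^a]$ --- a real but small, $\rho$-dependent quantity that still needs a careful uniform bound. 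Invoking ``strict monotonicity of $\rho\mapsto\BE[\min(L,\rho)^a]$'' is not a proof of the required inequality with a concrete $c>0$.

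By contrast, the paper avoids the cancellation entirely with two tailor-made constructions. For $a\geq 1$ it takes $k=3$, $I_1=\{1,2,3\}$, $I_2=\{1,2\}$: adding the midpoint $z_3$ of an equilateral triangle to the pair $\{z_1,z_2\}$ strictly \emph{decreases} the edge-length functional, by the convexity-type inequality $2>2(1/2)^a+(\sqrt3/2)^a$, and one scales the configuration by a small $s>0$ so that the favourable event $\eta(B^d(x,4s))=0$ has probability close to one. For $0<a<1$ it takes $k=N^d$ grid points: the sub-additivity of $u\mapsto u^a$ means the many new short edges dominate the total shortening of existing edges, and $N$ is chosen large so the discrimination exceeds one. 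Both constructions produce an explicit, uniform $c$ with no delicate sign condition, at the cost of using more points. Your plan is a genuinely different route, but as written it has a real gap: the cancellation is not ruled out, and the fallback that is meant to cover it is not actually shown to work.
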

\begin{proof}
Since $v_0=1$ as observed above, we restrict from now on to the case $a>0$. Without loss of generality, we can assume that $e=(0,-1,0,\hdots,0)$. We define $W_0=\{x\in W: B^d(x,\max\{2,\sqrt{d}\}r_W)\subset W\}$. 

For technical reasons, we have to distinguish two cases and start with the situation that $a\geq 1$. Let us define
$$
z_1:=(-1/2,-\sqrt{3}/2,0,\hdots,0)\,,\ z_2:=(1/2,-\sqrt{3}/2,0,\hdots,0)\,,\ z_3:=(0,-\sqrt{3}/2,0,\hdots,0)\,.
$$
The points $0$, $z_1$ and $z_2$ form an equilateral triangle with side-length one, and $z_3$ is the midpoint between $z_1$ and $z_2$, see Figure \ref{fig:Variance}. Because of $2 > 2 (1/2)^a + (\sqrt{3}/2)^a$ we have that
$$
\ell_e(z_1,\delta_{0}+\delta_{z_1}+\delta_{z_2})^a + \ell_e(z_2,\delta_{0}+\delta_{z_1}+\delta_{z_2})^a > c+\|z_1-z_3\|^a+ \|z_2-z_3\|^a + \|z_3\|^a
$$
with a constant $c\in(0,\infty)$. By continuity, there is a constant $r_0\in(0,r_W)$ such that
$$
\ell_e(y_1,\delta_{0}+\delta_{y_1}+\delta_{y_2})^a + \ell_e(y_2,\delta_{0}+\delta_{y_1}+\delta_{y_2})^a > \frac{c}{2}+\|y_1-y_3\|^a+ \|y_2-y_3\|^a + \|y_3\|^a
$$
for all $y_1\in B^d(z_1,r_0)\cap H_{z_1,e}^C$, $y_2\in B^d(z_2,r_0)\cap H_{z_2,e}^C$ and $y_3\in B^d(z_3,r_0)\cap H_{z_3,e}$.
Moreover, we choose constants $\tilde{c}\in(0,\infty)$ and $s\in(0,1)$ such that
\begin{equation}\label{eq:ExpectationDifferencePoints}
-\frac{c}{2} \exp(-4^d \kappa_d s^d)+ \Big({\sqrt{3}\over 2}+1\Big)^a (1-\exp(-4^d \kappa_d s^d)) \leq - \tilde{c}\,.
\end{equation}
In the following let $x\in W_0$ and $y_1,y_2,y_3$ be as above. If $\eta(B^d(x,4s))=0$, we have that
\begin{align*}
& \widehat{\mathcal{L}}^{(a)}_W\Big(\operatorname{DSF}(\eta+\delta_x+\sum_{i=1}^3\delta_{x+s y_i})\Big) - \widehat{\mathcal{L}}^{(a)}_W\Big(\operatorname{DSF}(\eta+\delta_{x}+\sum_{i=1}^2\delta_{x+s y_i})\Big)\\
& \leq s^a \Big(\|y_1-y_3\|^a+\|y_2-y_3\|^a+\|y_3\|^a-\ell_e(y_1,\delta_{0}+\delta_{y_1}+\delta_{y_2})^a -\ell_e(y_2,\delta_{0}+\delta_{y_1}+\delta_{y_2})^a\Big)\\
& \leq - \frac{c s^a}{2}\,.
\end{align*}
On the other hand, it holds that 
\begin{align*}
& \widehat{\mathcal{L}}^{(a)}_W\Big(\operatorname{DSF}(\eta+\delta_x+\sum_{i=1}^3 \delta_{x+sy_i})\Big) - \widehat{\mathcal{L}}^{(a)}_W\Big(\operatorname{DSF}(\eta+\delta_x+\sum_{i=1}^2\delta_{x+sy_i})\Big)\\
&\qquad \leq \ell_e(x+sy_3,\eta+\delta_x+\sum_{i=1}^3 \delta_{x+sy_i})\\
&\qquad \leq s^a \Big({\sqrt{3}\over 2}+1\Big)^a\, .
\end{align*}
Denote by $A$ the event that $\eta(B^d(x,4s))=0$. Then, combining the previous two inequalities with the fact that $\mathbb{P}(A)=\exp(-4^d\kappa_ds^d)$ and \eqref{eq:ExpectationDifferencePoints}, we see that
\begin{align*}
&\BE\Big[ \widehat{\mathcal{L}}^{(a)}_W\Big(\operatorname{DSF}(\eta+\delta_x+\sum_{i=1}^3 \delta_{x+y_i})\Big) - \widehat{\mathcal{L}}^{(a)}_W\Big(\operatorname{DSF}(\eta+\delta_x+\sum_{i=1}^2 \delta_{x+y_i})\Big)\Big] \\
&=\BE\Big[ \Big(\widehat{\mathcal{L}}^{(a)}_W\Big(\operatorname{DSF}(\eta+\delta_x+\sum_{i=1}^3 \delta_{x+y_i})\Big) - \widehat{\mathcal{L}}^{(a)}_W\Big(\operatorname{DSF}(\eta+\delta_x+\sum_{i=1}^2 \delta_{x+y_i})\Big)\Big){\bf 1}_A\\
&\qquad +\Big(\widehat{\mathcal{L}}^{(a)}_W\Big(\operatorname{DSF}(\eta+\delta_x+\sum_{i=1}^3 \delta_{x+y_i})\Big) - \widehat{\mathcal{L}}^{(a)}_W\Big(\operatorname{DSF}(\eta+\delta_x+\sum_{i=1}^2 \delta_{x+y_i})\Big)\Big){\bf 1}_{A^C}\Big]\\
&\leq -{\frac{cs^a}{2}}\,\exp(-4^d\kappa_ds^d)+s^a\Big({\sqrt{3}\over 2}+1\Big)^a\,(1-\exp(-4^d\kappa_ds^d))\\
&\leq -\tilde{c}s^a
\end{align*}
for $x\in W_0$ and $y_1\in B^d(sz_1,sr_0)\cap H_{sz_1,e}^C$, $y_2\in B^d(sz_2,sr_0)\cap H_{sz_2,e}^C$, $y_3\in B^d(sz_3,sr_0)\cap H_{sz_3,e}$. Now, Proposition \ref{prop:LPSvariance} concludes the proof for $a\geq 1$.

\begin{figure}[t]
\includegraphics[width=0.45\columnwidth]{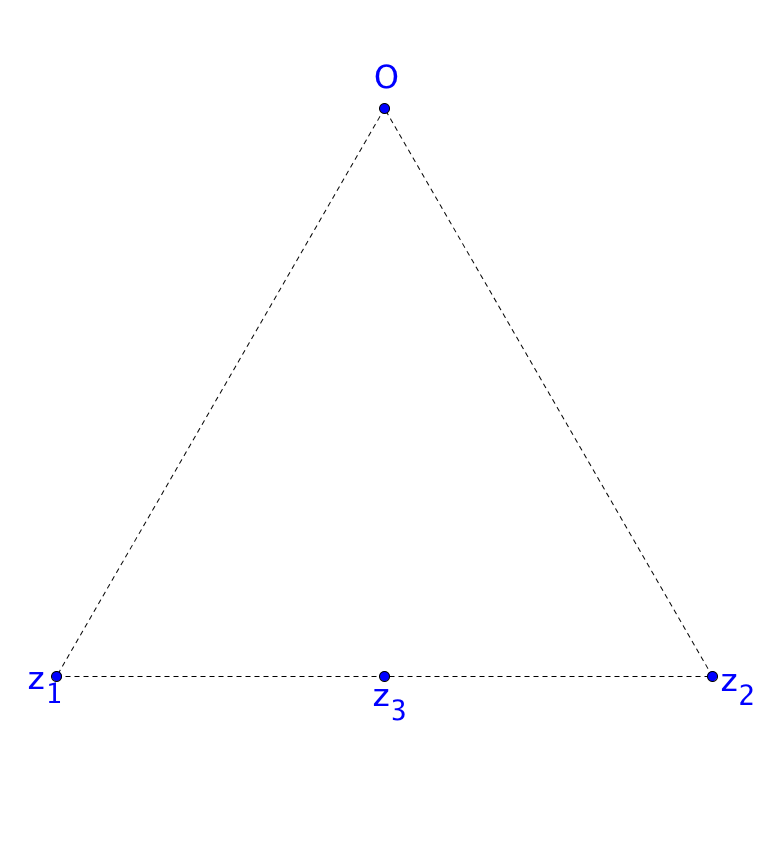}\quad
\includegraphics[width=0.45\columnwidth]{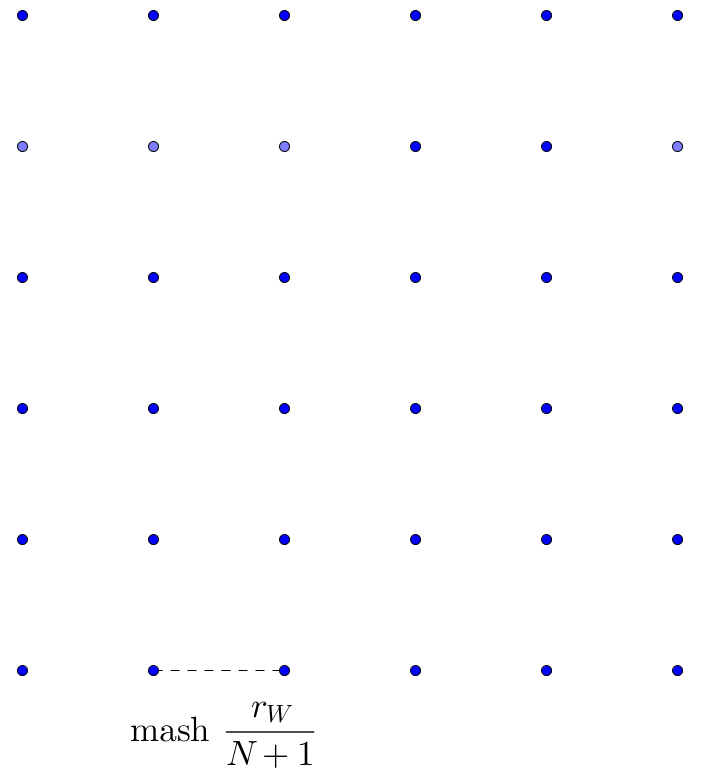}
\caption{Illustration of the constructions in the proof of Lemma \ref{lem:VarianceStrictlyPositive} for the planar case if $a\geq 1$ (left) and $0< a<1$ (right).}
\label{fig:Variance}
\end{figure}

Next, we consider the case that $0< a <1$. Let $N\in\N$ be a sufficiently large integer (a lower bound for $N$ will be stated later). We denote the points of the grid
$$
G:=\Big\{ \frac{r_W}{N+1}(i_1,\hdots,i_d)-(0,r_W,0,\hdots,0): i_1,\hdots,i_d=1,\hdots,N\Big\}
$$
by $z_1,\hdots,z_{N^d}$, see Figure \ref{fig:Variance}. Let $x\in W_0$ and $y_i\in B^d(z_i,r_W/(4N+4))$ for $i=1,\hdots,N^d$. By construction $y_1,\hdots,y_{N^d}$ are included in the $d$-dimensional hypercube $Q:=[0,r_W]\times [-r_W,0]\times [0,r_W]\times\hdots\times [0,r_W]$ so that $x+y_i\in W$ for all $i\in\{1,\hdots,N^d\}$. In the following, we derive a lower bound for
$$
\Big| \BE\Big[\widehat{\mathcal{L}}_W^{(a)}\Big(\operatorname{DSF}(\eta+\delta_x+\sum_{i=1}^{N^d}\delta_{x+y_i})\Big) - \widehat{\mathcal{L}}_W^{(a)}\Big(\operatorname{DSF}(\eta+\delta_x)\Big) \Big] \Big|\,.
$$
Adding the points $x+y_1,\hdots,x+y_{N^d}$ to the point process $\eta+\delta_x$ generates new edges. If $\eta(x+Q)=0$, then
$$
\ell_e(x+y_i,\eta+\delta_x+\sum_{i=1}^{N^d}\delta_{x+y_i})\geq \frac{r_W}{2(N+1)}\,, \qquad i=1,\hdots,N^d\,,
$$
where the lower bound is just half of the mash of the grid $G$. Consequently the expectation of the contribution of the new points is at least
$$
\exp(-r_W^d) N^d \frac{r_W^a}{2^a(N+1)^a}\,.
$$
On the other hand, inserting the additional points $x+y_1,\hdots,x+y_{N^d}$ can also shorten the edges belonging to some points of $\eta\cap W$, whereas the edge associated with $x$ is not affected. By the triangle inequality and the assumption $0\leq a<1$, we see that, for $z\in\eta\cap W$ with $\hat{n}(z,\eta+\delta_x+\sum_{i=1}^{N^d} \delta_{x+y_i})\in\{x+y_1,\hdots,x+y_{N^d}\}$,
\begin{align*}
\ell_e(z,\eta+\delta_x+\sum_{i=1}^{N^d}\delta_{x+y_i})^a -\ell_e(z,\eta+\delta_x)^a
& \geq \Big\|z-\hat{n}(z,\eta+\delta_x+\sum_{i=1}^{N^d}\delta_{x+y_i})\Big\|^a-\|z-x\|^a\\
& \geq -\Big\|\hat{n}(z,\eta+\delta_x+\sum_{i=1}^{N^d}\delta_{x+y_i})-x\Big\|^a\,,
\end{align*}
where we have used that $x\in H_{z,e}$ and that $\|z-x\|\geq \|z-\hat{n}(z,\eta+\delta_x+\sum_{i=1}^{N^d} \delta_{x+y_i})\|$. Since $\|x+y_i-x\|\leq \sqrt{d}\, r_W$ for $i=1,\hdots,N^d$, this means that
$$
0\geq \ell_e(z,\eta+\delta_x+\sum_{i=1}^{N^d}\delta_{x+y_i})^a-\ell_e(z,\eta+\delta_x)^a \geq -(\sqrt{d}\,r_W)^a\,, \quad z\in\eta+\delta_x\,.
$$
So, it remains to bound the expectation of the number of points of $\eta\cap W$ whose edges are affected by adding $x+y_1,\hdots,x+y_{N^d}$, i.e.,
$$
R:=\BE\Big[\sum_{z\in \eta\cap W} \I\{\ell_e(z,\eta+\delta_x+\sum_{i=1}^{N^d}\delta_{x+y_i}) < \ell_e(z,\eta+\delta_x) \}\Big]\,.
$$
Using the multivariate Mecke formula \eqref{eq:MeckeMulti} and denoting by $B_Q$ the smallest ball circumscribing the hypercube $Q$, we obtain that
$$
R \leq \kappa_d (\sqrt{d}\,r_W/2)^d + \int_{B_Q^C} \BP(\ell_e(z,\eta+\delta_z)\geq \inf_{u\in B_Q}\|u-z\|) \, \dint z\,.
$$
Transformation into spherical coordinates yields that
$$
R \leq \kappa_d (\sqrt{d}r_W/2)^d + d\kappa_d \int_{\sqrt{d}r_W/2}^\infty \exp(-\kappa_d (r-\sqrt{d}\,r_W/2)^d/2) \, r^{d-1} \, \dint r=: C_R\,.
$$
Consequently, we have that
\begin{align*}
& \BE\Big[\widehat{\mathcal{L}}^{(a)}_W\Big(\operatorname{DSF}(\eta+\delta_x+\sum_{i=1}^{N^d}\delta_{x+y_i})\Big) - \widehat{\mathcal{L}}_W^{(a)}\Big(\operatorname{DSF}(\eta+\delta_x)\Big)  \Big]\\
 &\qquad \geq \exp(-r_W^d) N^d  \frac{r_W^a}{2^a(N+1)^a} - C_R (\sqrt{d}\,r_W)^a\,.
\end{align*}
We now choose $N$ such that the right-hand side is larger than $1$ (note that this is always possible). Summarizing, we have shown that
$$
 \BE\Big[\widehat{\mathcal{L}}^{(a)}_W\Big(\operatorname{DSF}(\eta+\delta_x+\sum_{i=1}^{N^d}\delta_{x+y_i})\Big) - \widehat{\mathcal{L}}_W^{(a)}\Big(\operatorname{DSF}(\eta+\delta_x)\Big)  \Big]\geq 1
$$
for $x\in W_0$ and $y_i\in B^d(z_i,r_W/(4N+4))$, $i=1,\hdots,N^d$. Now, the assertion for $0< a<1$ follows from Proposition \ref{prop:LPSvariance}.

It remains to transfer the result to the variances of the edge-length functionals $\cL_t^{(a)}$ of the radial spanning tree. Since there is a constant $r_0\in(0,\infty)$ such that $r_{B^d(0,r)}=1$ for all $r\geq r_0$, we have that $\BV[\widehat{\mathcal{L}}^{(a)}_{B^d(0,r)}]\geq \hat{v}_a \kappa_dr^d$ with the same constant $\hat{v}_a\in(0,\infty)$ for all $r\geq r_0$. On the other hand, Lemma \ref{lem:VarianceDSF} implies that, for $a\geq 0$,
$$
v_a=\lim_{r\to\infty}\frac{\BV[\widehat{\mathcal{L}}^{(a)}_{B^d(0,r)}]}{\kappa_d r^d} \geq \hat{v}_a>0\,.
$$
This completes the proof.
\end{proof}

\section{Proof of Theorem \ref{thm:CLT}} \label{sec:ProofCLT}

Our aim is to apply Proposition \ref{prop:LPS}. In view of the variance asymptotics provided in Theorem \ref{thm:Variance}, it remains to investigate the first and the second-order difference operator of $t^{a/d}\cL_t^{(a)}$.

\begin{lemma}\label{lem:FirstOrderDifferenceOperator}
For any $a\geq 0$ there are constants $C_{1},C_{2}\in(0,\infty)$ only depending on $W$, $a$ and $d$ such that
$$
\BE[|D_z(t^{a/d}\cL_t^{(a)})|^5]\leq C_{1} \qquad\text{and}\qquad\BE[|D_{z_1,z_2}^2(t^{a/d}\cL_t^{(a)})|^5]\leq C_{2}
$$
for $z,z_1,z_2\in W$ and $t\geq 1$.
\end{lemma}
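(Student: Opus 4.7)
The plan for the first-order bound is to decompose the discrete derivative $D_z \cL_t^{(a)}$ into two natural pieces and then control each in fifth moment using the exponential tail estimate of Lemma~\ref{lem:ExpTail}. Adding $z$ to $\eta_t$ produces exactly one new edge, of length $\ell(z, \eta_t + \delta_z)$, and can only shorten the edges of those $x \in \eta_t$ whose former radial nearest neighbour is beaten by $z$. Writing $N_z := \{x \in \eta_t : n(x, \eta_t + \delta_z) = z\}$, one obtains
\[
D_z \cL_t^{(a)} \;=\; \ell(z, \eta_t + \delta_z)^a \;-\; \sum_{x \in N_z}\bigl(\ell(x, \eta_t)^a - \|x-z\|^a\bigr),
\]
so $|D_z \cL_t^{(a)}| \leq \ell(z, \eta_t + \delta_z)^a + \sum_{x \in N_z}\ell(x, \eta_t)^a$. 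The fifth-moment contribution of the first summand is controlled directly by $c_{5a}$ from Lemma~\ref{lem:ExpTail}.

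For the sum over $N_z$, the key observation is that the event $\{x \in N_z\}$ forces $\|z\| \leq \|x\|$, $\|x-z\| \leq \|x\|$ and the emptiness $\eta_t\bigl(B^d(x, \|x-z\|) \cap B^d(0, \|x\|)\bigr) = 0$. Via Lemma~\ref{lem:KonstanteAlphad} this yields, after applying the multivariate Mecke formula~\eqref{eq:MeckeMulti}, a Palm-type probability bound of order $\exp(-t\alpha_W \kappa_d \|x-z\|^d)$; moreover, conditional on this empty-ball event, the tail of $\ell(x, \eta_t + \delta_x)$ at scale $u \geq \|x-z\|$ continues to be of the order $\exp(-t\alpha_W \kappa_d u^d)$, since the empty-ball requirement only strengthens as $u$ grows. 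Combining $(\sum_i a_i)^5 \leq |N_z|^4 \sum_i a_i^5$ with the Cauchy--Schwarz inequality, the estimate reduces to showing
\[
\BE\bigl[|N_z|^k\bigr] \leq C_k \quad \text{and} \quad t^{10a/d}\,\BE\Bigl[\sum_{x \in N_z}\ell(x, \eta_t)^{10a}\Bigr] \leq C
\]
uniformly in $z, t$ and $k \geq 1$. The first follows from the multivariate Mecke formula together with the product bound $\BP\bigl(\bigcap_{i=1}^k \{x_i \in N_z\}\bigr) \leq \prod_i \BP(x_i \in N_z)^{1/k}$ and the rescaling $y = t^{1/d}(x-z)$. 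The second uses the same rescaling after splitting the $u$-integral in the distribution of $\ell(x, \eta_t + \delta_x)$ at $u = \|x-z\|$ and integrating by parts.

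The second-order bound follows from the trivial inequality $|D^2_{z_1, z_2}\cL_t^{(a)}| \leq |D_{z_2}\cL_t^{(a)}(\eta_t + \delta_{z_1})| + |D_{z_2}\cL_t^{(a)}(\eta_t)|$. The preceding analysis applies directly to $D_{z_2}\cL_t^{(a)}(\eta_t)$ and, thanks to the monotonicity~\eqref{eq:monotone}, also to $D_{z_2}\cL_t^{(a)}(\eta_t + \delta_{z_1})$: the extra atom $\delta_{z_1}$ only decreases the admissible empty-ball probabilities and the relevant edge lengths, so every probabilistic estimate above stays valid with at worst a smaller constant. Taking fifth powers and expectations then yields $\BE\bigl[|D^2_{z_1, z_2}(t^{a/d}\cL_t^{(a)})|^5\bigr] \leq C_2$. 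The main technical obstacle is the second reduction above: a crude bound such as $\ell(x, \eta_t) \leq \|x\|$ loses the decisive factor $t^{-a/d}$, so one really needs the quantitative consequence of Lemma~\ref{lem:KonstanteAlphad} to propagate the empty-ball estimate into the conditional distribution of $\ell(x, \eta_t + \delta_x)$, ensuring that each affected point contributes on the correct scale $t^{-a/d}$ to match the $t^{a/d}$ prefactor.
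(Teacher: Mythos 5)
Your proposal follows essentially the same route as the paper: bound $|D_z\cL_t^{(a)}|$ by the length of the one new edge plus the sum of the lengths of the edges shortened by inserting $z$, control the two pieces via the exponential tail bound of Lemma \ref{lem:ExpTail} and the multivariate Mecke formula, use Cauchy--Schwarz/H\"older to decouple the counting factor from the edge lengths, and handle the second-order difference via the triangle inequality and the monotonicity relation \eqref{eq:monotone}. Two small remarks: the paper replaces your $(\sum a_i)^5\leq|N_z|^4\sum a_i^5$ by the cleaner $\sum a_i\leq|N_z|\max a_i$ before applying Cauchy--Schwarz (your bookkeeping leaves an extra factor $|N_z|$ in the second factor that needs one more pass), and in the second-order step the paper tracks the extra atom explicitly (the inserted point $z_2$ contributes an additional term to the counting sum and to the max), whereas you handle it somewhat informally; both points are easy to fix and do not change the argument.
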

\begin{proof}
If $a=0$, we have $\cL_t^{(a)}=\eta_t(W)$ so that the statements are obviously true with $C_1=C_2=1$, for example. For $a> 0$, $\xi\in\bN$ and $z\in W$ we have that
$$
|D_z\cL^{(a)}(\RST(\xi))|\leq \ell(z,\xi+\delta_z)^a+\sum_{x\in\xi}\I\{\ell(x,\xi)\geq\|z-x\|\} \ \max_{\stackrel{x\in\xi}{\ell(x,\xi)\geq\|z-x\|}} \ell(x,\xi)^a\,.
$$
Consequently, it follows from Jensen's inequality that
\begin{align*}
\BE[|D_z(t^{a/d}\cL_t^{(a)})|^5]&\leq 16 t^{5a/d}\,\BE[\ell(z,\eta_t+\delta_z)^{5a}]\\
& \quad +16\BE\Big[\Big(\sum_{x\in\eta_t}\I\{\ell(x,\eta_t)\geq\|z-x\|\}\Big)^5\Big(\max_{x\in\eta_t\atop\ell(x,\eta_t)\geq\|z-x\|} t^{a/d}\ell(x,\eta_t)^a\Big)^5\Big]\,.
\end{align*}
Lemma \ref{lem:ExpTail} implies that $t^{5a/d}\BE[\ell(z,\eta_t+\delta_z)^{5a}]\leq c_{5a}$ for $z\in W$ and $t\geq 1$. For the second term, the Cauchy-Schwarz inequality yields the bound
\begin{align}\label{eq:10thMomentEstimate}
\Big(\BE\Big[\Big(\sum_{x\in\eta_t}\I\{\ell(x,\eta_t)\geq\|z-x\|\}\Big)^{10}\Big]\,\Big)^{1/2}\;\Big(\BE[\max_{x\in\eta_t\atop\ell(x,\eta_t)\geq\|z-x\|}t^{10a/d}\ell(x,\eta_t)^{10a}]\,\Big)^{1/2} \,.
\end{align}
The $10$th moment of $\sum_{x\in\eta_t}\I\{\ell(x,\eta_t)\geq\|z-x\|\}$ can be expressed as a linear combination of terms of the form
$$
\BE\Big[\sum_{(x_1,\ldots,x_k)\in\eta_{t,\neq}^k}\I\{\ell(x_i,\eta_t)\geq\|x_i-z\|,\,i=1,\ldots,k\}\Big]
$$
with $k\in\{1,\ldots,10\}$.
Applying the multivariate Mecke formula \eqref{eq:MeckeMulti}, the monotonicity relation \eqref{eq:monotone}, H\"older's inequality and Lemma \ref{lem:ExpTail} yields, for each such $k$,
\begin{align*}
&\BE\Big[\sum_{(x_1,\ldots,x_k)\in\eta_{t,\neq}^k}\I\{\ell(x_i,\eta_t)\geq\|x_i-z\|,\,i=1,\ldots,k\}\Big]\\
&=t^k\int_{W^k}\BP\Big(\ell(x_i,\eta_t+\sum_{i=1}^k\delta_{x_i})\geq\|x_i-z\|\,,i=1,\ldots,k\Big)\,\dint(x_1,\ldots,x_k)\\
&\leq t^k\int_{W^k}\BP\big(\ell(x_i,\eta_t+\delta_{x_i})\geq\|x_i-z\|\,,i=1,\ldots,k\big)\,\dint(x_1,\ldots,x_k)\\
&\leq t^k\int_{W^k}\prod_{i=1}^k\BP\big(\ell(x_i,\eta_t+\delta_{x_i})\geq\|x_i-z\|\big)^{1/k}\,\dint(x_1,\ldots,x_k)\\
&\leq \Big(t\int_W \exp(-t\alpha_W\kappa_d\|x-z\|^d/k) \,\dint x\Big)^k\,.
\end{align*}
Introducing spherical coordinates and replacing $W$ by $\R^d$, we see that
\begin{align*}
& t\int_{W} \exp(-t\alpha_W\kappa_d\|x-z\|^d/k) \, \dint x \leq td\kappa_d\int_0^\infty \exp(-t\alpha_W\kappa_dr^d/k) \, r^{d-1}\,\dint r = \frac{k}{\alpha_W}\,.
\end{align*}
This implies that $\BE\big[\big(\sum_{x\in\eta_t}\I\{\ell(x,\eta_t)\geq\|z-x\|\}\big)^{10}\big]$ is uniformly bounded for $z\in W$ and $t\geq 1$. Moreover, for $u\geq 0$ we have that
\begin{align*}
& \BP\big(t^{a/d}\max_{\stackrel{x\in\eta_t}{\ell(x,\eta_t)\geq\|z-x\|}} \ell(x,\eta_t)^a\geq u\big)\\
& = \BP(\exists x\in\eta_t: \ell(x,\eta_t)\geq t^{-1/d}u^{1/a}\,,\ell(x,\eta_t)\geq \|z-x\|)\\
& \leq \BE\Big[\sum_{x\in\eta_t}\I\{\ell(x,\eta_t)\geq\max\{t^{-1/d}u^{1/a},\|z-x\|\}\}\Big] \allowdisplaybreaks\\
& = t \int_{W} \BP\big(\ell(x,\eta_t+\delta_x)\geq\max\{t^{-1/d}u^{1/a},\|z-x\|\}\big) \, \dint x\\
& \leq t\int_{\R^d} \exp(-t\alpha_W\kappa_d\max\{t^{-1/d}u^{1/a},\|z-x\|\}^d) \, \dint x\\
& = \kappa_du^{d/a} \exp(-\alpha_W\kappa_du^{d/a}) + t\int_{\R^d\setminus B^d(z,t^{-1/d}u^{1/a})} \exp(-t\alpha_W\kappa_d\|z-x\|^d) \, \dint x \allowdisplaybreaks\\
& = \kappa_du^{d/a} \exp(-\alpha_W\kappa_du^{d/a}) + d\kappa_dt \int_{t^{-1/d}u^{1/a}}^\infty \exp(-t\alpha_W\kappa_d r^d) \, r^{d-1} \, \dint r\\
& = \kappa_du^{d/a} \exp(-\alpha_W\kappa_du^{d/a}) + \frac{1}{\alpha_W} \exp(-\alpha_W\kappa_d u^{d/a}) \,,
\end{align*}
where we have used the Mecke formula \eqref{eq:MeckeMulti}, Lemma \ref{lem:ExpTail} and a transformation into spherical coordinates. This exponential tail behaviour implies that the second factor of the product in \eqref{eq:10thMomentEstimate} is uniformly bounded for all $t\geq 1$. Altogether, we see that $\BE[|D_z(t^{a/d}\cL_t^{(a)})|^5]\leq C_{1}$ for all $z\in W$ and $t\geq 1$ with a constant $C_{1}\in(0,\infty)$ only depending on $W$, $a$ and $d$.

For the second-order difference operator we have that, for $z_1,z_2\in W$,
\begin{equation}\label{eq:DecompositionSecondDifferenceOperatorMoment}
\begin{split}
\BE[|D_{z_1,z_2}^2(t^{a/d}\cL_t^{(a)})|^5] &\leq 16 t^{5a/d}\,\BE[|D_{z_1}\cL^{(a)}(\operatorname{RST}(\eta_t))|^5]\\
&\qquad +16 t^{5a/d}\,\BE[|D_{z_1}\cL^{(a)}(\operatorname{RST}(\eta_t+\delta_{z_2}))|^5]\,.
\end{split}
\end{equation}
In view of the argument for the first-order difference operator above, it only remains to consider the second term. For $z_1,z_2\in W$ and $\xi\in\bN$ we have that
\begin{align*}
&\big|D_{z_1}(t^{a/d}\cL^{(a)}(\operatorname{RST}(\xi+\delta_{z_2})))\big| \leq t^{a/d}\,\ell(z_1,\xi+\delta_{z_1}+\delta_{z_2})^a\\
&\qquad+\Big(\sum_{x\in\xi+\delta_{z_2}}\I\{\ell(x,\xi+\delta_{z_2})\geq\|z_1-x\|\}\Big)\Big(\max_{\stackrel{x\in\xi+\delta_{z_2}}{\ell(x,\xi+\delta_{z_2})\geq\|z_1-x\|}}t^{a/d}\ell(x,\xi+\delta_{z_2})^a\Big)\,.
\end{align*}
Using the monotonicity relation \eqref{eq:monotone}, we find that
\begin{align*}
t^{a/d}\,\ell(z_1,\xi+\delta_{z_1}+\delta_{z_2})^a &\leq t^{a/d}\,\ell(z_1,\xi+\delta_{z_1})^a\,,\\
\sum_{x\in\xi+\delta_{z_2}}\I\{\ell(x,\xi+\delta_{z_2})\geq\|z_1-x\|\} &\leq 1+\sum_{x\in\xi}\I\{\ell(x,\xi)\geq\|z_1-x\|\}
\end{align*}
and
$$\max_{\stackrel{x\in\xi+\delta_{z_2}}{\ell(x,\xi+\delta_{z_2})\geq\|z_1-x\|}}t^{a/d}\,\ell(x,\xi+\delta_{z_2})^a \leq t^{a/d}\,\ell(z_2,\xi+\delta_{z_2})^a+\max_{\stackrel{x\in\xi}{\ell(x,\xi)\geq\|z_1-x\|}}t^{a/d}\,\ell(x,\xi)^a\,.$$
This implies that for $\xi=\eta_t$ the $5$th and the $10$th moment of these expressions are uniformly bounded in $t$ by the same arguments as above. In view of \eqref{eq:DecompositionSecondDifferenceOperatorMoment} this shows that $\BE[|D_{z_1,z_2}^2(t^{a/d}\cL_t^{(a)})|^5]\leq C_{2}$ for all $z_1,z_2\in W$ and $t\geq 1$ with a constant $C_{2}\in(0,\infty)$ only depending on $W$, $a$ and $d$.
\end{proof}

While Lemma \ref{lem:FirstOrderDifferenceOperator} shows that assumption \eqref{eq:MomentsDifferenceOperator} in Proposition \ref{prop:LPS} is satisfied, the following result puts us in the position to verify also assumption \eqref{eq:SupDifferenceOperator} there.

\begin{lemma}\label{lem:SecondOrderDifferenceOperator}
For $a\geq 0$, $z_1,z_2\in W$ and $t\geq 1$,
$$
\BP(D_{z_1,z_2}^2\cL_t^{(a)}\neq 0) \leq (2+2/\alpha_W)\,\exp(-t\alpha_W\kappa_d\|z_1-z_2\|^d/2^{d})
$$
with $\alpha_W$ as in Lemma \ref{lem:KonstanteAlphad}.
\end{lemma}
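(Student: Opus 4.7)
The strategy is to decompose the event $\{D^2_{z_1,z_2}\cL_t^{(a)}\neq 0\}$ into three sub-events according to which vertex of the radial spanning tree has its edge contribution affected in a non-cancelling way, and then bound each one separately using Lemma~\ref{lem:ExpTail} and Lemma~\ref{lem:KonstanteAlphad}.

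Writing $\cL^{(a)}(\operatorname{RST}(\cdot))$ as the sum of the edge-length contributions $\ell(\,\cdot\,,\cdot\,)^a$ and unfolding the definition $D^2_{z_1,z_2}F=D_{z_2}(D_{z_1}F)$, one checks directly (using the fact that inserting a point $z$ only affects the edge of a vertex $v$ if $\|z\|\leq\|v\|$ and $\|v-z\|<\ell(v,\cdot)$) that $\{D^2_{z_1,z_2}\cL_t^{(a)}\neq 0\}$ forces at least one of the following:
\begin{itemize}
\item[(i)] $z_2$ is a candidate for $z_1$'s radial nearest neighbour, i.e.\ $\|z_2\|\leq\|z_1\|$ and $\|z_1-z_2\|\leq\ell(z_1,\eta_t+\delta_{z_1})$;
\item[(ii)] the symmetric condition with $z_1$ and $z_2$ swapped;
\item[(iii)] there exists $x\in\eta_t$ such that $\|z_1\|,\|z_2\|\leq\|x\|$ and $\max\{\|x-z_1\|,\|x-z_2\|\}<\ell(x,\eta_t)$, i.e.\ both $z_1$ and $z_2$ are candidates for $x$'s radial nearest neighbour.
\end{itemize}
For (i) and (ii), Lemma~\ref{lem:ExpTail} applied to $u=\|z_1-z_2\|$ gives immediately a bound $\exp(-t\alpha_W\kappa_d\|z_1-z_2\|^d)\leq\exp(-t\alpha_W\kappa_d\|z_1-z_2\|^d/2^d)$, so the union bound gives a contribution of at most $2\exp(-t\alpha_W\kappa_d\|z_1-z_2\|^d/2^d)$.

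For (iii) I bound the probability by the expected number of such $x$. Applying the Mecke formula \eqref{eq:MeckeMulti} and then Lemma~\ref{lem:ExpTail} with $u=\max\{\|x-z_1\|,\|x-z_2\|\}$ yields
$$
\BP(\text{(iii) holds})\;\leq\;t\int_W \exp\!\left(-t\alpha_W\kappa_d \max\{\|x-z_1\|,\|x-z_2\|\}^d\right)\,\dint x.
$$
After the translation $y=x-z_1$ and setting $w=z_2-z_1$, the integrand becomes $\exp(-t\alpha_W\kappa_d \max\{\|y\|,\|y-w\|\}^d)$. I split $\R^d$ along the perpendicular bisector of $\{0,w\}$: on the half-space closer to $w$ one has $\max=\|y\|$ and this half-space is contained in $\R^d\setminus B^d(0,\|w\|/2)$; on the half-space closer to $0$, $\max=\|y-w\|$ and one has containment in $\R^d\setminus B^d(w,\|w\|/2)$. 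Passing to spherical coordinates, each half-space integral is bounded by
$$
d\kappa_d\int_{\|w\|/2}^{\infty}\exp(-t\alpha_W\kappa_d r^d)\,r^{d-1}\,\dint r \;=\; \frac{1}{t\alpha_W}\exp(-t\alpha_W\kappa_d\|w\|^d/2^d),
$$
so that (iii) contributes at most $\frac{2}{\alpha_W}\exp(-t\alpha_W\kappa_d\|z_1-z_2\|^d/2^d)$. Adding the three contributions gives the claimed bound.

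The main obstacle is case (iii): the naive estimate $\max\{\|x-z_1\|,\|x-z_2\|\}\geq\|z_1-z_2\|/2$ combined with $\max\geq\|x-z_1\|$ via an AM-style splitting $\max^d\geq(\|x-z_1\|^d+(\|z_1-z_2\|/2)^d)/2$ produces only $\exp(-t\alpha_W\kappa_d\|z_1-z_2\|^d/2^{d+1})$, which loses the desired factor of $2$ in the exponent. It is precisely the symmetry-based splitting along the perpendicular bisector of $z_1$ and $z_2$, which assigns the correct one of $\|y\|$, $\|y-w\|$ to each half-space and leaves the exponential decay intact via a clean spherical-coordinate computation, that yields the sharp exponent $\|z_1-z_2\|^d/2^d$ matching the simple Lemma~\ref{lem:ExpTail} bounds in (i) and (ii).
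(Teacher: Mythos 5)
Your proposal is correct and takes essentially the same route as the paper: you decompose the event into the two "cross" contributions (the edges at $z_1$ and $z_2$ themselves) and the contribution from existing points of $\eta_t$, bound the first two via Lemma~\ref{lem:ExpTail} at $u=\|z_1-z_2\|$, and bound the third via Mecke plus the observation that the max distance to $z_1,z_2$ is at least $\|z_1-z_2\|/2$, which you integrate by splitting $\R^d$ according to which of the two distances realizes the max. The paper phrases this as a decomposition $D^2_{z_1,z_2}\cL_t^{(a)}=\sum_{y\in\eta_t}D^2_{z_1,z_2}(\ell(y,\eta_t)^a)+D_{z_1}(\ell(z_2,\eta_t+\delta_{z_2})^a)+D_{z_2}(\ell(z_1,\eta_t+\delta_{z_1})^a)$ rather than as a union of events, and writes the max-splitting as a pointwise bound by a sum of two integrals over the complements of balls of radius $\|z_1-z_2\|/2$ around $z_1$ and $z_2$, but these are the same estimates in minor disguise.
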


\begin{proof}
Since $D^2_{z_1,z_2}\cL_t^{(a)}=0$ for $a=0$, we can restrict ourselves to $a>0$ in the following. First notice that
$$
D_{z_1,z_2}^2\cL_t^{(a)} = \sum_{y\in\eta_t}D_{z_1,z_2}^2 \big(\ell(y,\eta_t)^a\big) + D_{z_1}\big(\ell(z_2,\eta_t+\delta_{z_2})^a\big) + D_{z_2}\big(\ell(z_1,\eta_t+\delta_{z_1})^a\big) \,.
$$
To have $D_{z_1,z_2}^2\cL_t^{(a)}\neq 0$, at least one of the above terms has to be non-zero. Moreover, for $x,z\in W$ and $\xi\in\bN$, $D_z(\ell(x,\xi+\delta_x)^a)\neq 0$ and $D_z\ell(x,\xi+\delta_x)\neq 0$ are equivalent. Thus,
\begin{align*}
\BP(D_{z_1,z_2}^2\cL_t^{(a)}\neq 0) & \leq \BP(\exists y\in\eta_t: D^2_{z_1,z_2}\big(\ell(y,\eta_t)^a\big) \neq 0)\\
& \qquad +\BP(D_{z_1}\big(\ell(z_2,\eta_t+\delta_{z_2})^a\big)\neq 0)+\BP(D_{z_2}\big(\ell(z_1,\eta_t+\delta_{z_1})^a\big)\neq 0)\\
& = \BP(\exists y\in\eta_t: D^2_{z_1,z_2}\ell(y,\eta_t) \neq 0)\\
& \qquad +\BP(D_{z_1}\ell(z_2,\eta_t+\delta_{z_2})\neq 0)+\BP(D_{z_2}\ell(z_1,\eta_t+\delta_{z_1})\neq 0)\,.
\end{align*}
Since $D_{z_1}\ell(z_2,\eta_t+\delta_{z_2})\neq 0$ requires that $\ell(z_2,\eta_t+\delta_{z_2})\geq \|z_1-z_2\|$, application of Lemma \ref{lem:ExpTail} yields
$$
\BP(D_{z_1}\ell(z_2,\eta_t+\delta_{z_2})\neq 0) \leq \BP(\ell(z_2,\eta_t+\delta_{z_2})\geq \|z_1-z_2\|) \leq \exp(-t\alpha_W\kappa_d\|z_1-z_2\|^d)
$$
and similarly
$$
\BP(D_{z_2}\ell(z_1,\eta_t+\delta_{z_1})\neq 0) \leq \exp(-t\alpha_W\kappa_d\|z_1-z_2\|^d)\,.
$$
Using Mecke's formula \eqref{eq:MeckeMulti}, the fact that $D^2_{z_1,z_2}\ell(y,\eta_t+\delta_y)\neq 0$ implies $\ell(y,\eta_t+\delta_y)\geq \max\{\|y-z_1\|,\|y-z_2\|\}$ and once again Lemma \ref{lem:ExpTail}, we finally conclude that
\begin{align*}
 \BP(\exists y\in\eta_t: D^2_{z_1,z_2}\ell(y,\eta_t)\neq 0)
& \leq \BE\Big[\sum_{y\in\eta_t} \I\{ D^2_{z_1,z_2}\ell(y,\eta_t)\neq 0 \}\Big]\\
& = t\int_W \BP(D^2_{z_1,z_2}\ell(y,\eta_t+\delta_y)\neq 0) \, \dint y\\
& \leq t\int_W \BP(\ell(y,\eta_t+\delta_y)\geq \max\{\|y-z_1\|,\|y-z_2\|\}) \, \dint y \\
&\leq t\int_W \exp(-t\alpha_W\kappa_d\max\{\|y-z_1\|,\|y-z_2\|\}^d) \, \dint y \allowdisplaybreaks\\
&\leq t\int_{\R^d\setminus B^d(z_1,\|z_1-z_2\|/2)}\exp(-t\alpha_W\kappa_d\|y-z_1\|^d) \,\dint y\\
&\hspace{1.5cm}+t\int_{\R^d\setminus B^d(z_2,\|z_1-z_2\|/2)} \exp(-t\alpha_W\kappa_d\|y-z_2\|^d) \,\dint y\\
&\leq \frac{2}{\alpha_W} \exp(-t\alpha_W\kappa_d\|z_1-z_2\|^d/2^d)\,.
\end{align*}
Consequently,
$$
\BP(D_{z_1,z_2}^2\cL_t^{(a)}\neq 0) \leq (2+2/\alpha_W) \, \exp(-t\alpha_W\kappa_d\|z_1-z_2\|^d/2^{d})\,,
$$
which proves the claim.
\end{proof}

After these preparations, we can now use Proposition \ref{prop:LPS} to prove Theorem \ref{thm:CLT}.

\begin{proof}[Proof of Theorem \ref{thm:CLT}]
It follows from Lemma \ref{lem:SecondOrderDifferenceOperator} by using spherical coordinates that
\begin{align*}
& \sup_{z_1\in W,\, t\geq 1}t\int_{W}\BP(D_{z_1,z_2}^2(t^{a/d}\cL_t^{(a)})\neq 0)^{1/20}\,\dint z_2\\
& \leq (2 + 2/\alpha_W)\sup_{z_1\in W,\, t\geq 1}t\int_{\R^d} \exp\big(-t\alpha_W\kappa_d\|z_1-z_2\|^d/(20\cdot 2^{d})\big) \,\dint z_2\\
& \leq (2 + 2/\alpha_W) \sup_{t\geq 1}\,t\int_{\R^d} \exp\big(-t\alpha_W\kappa_d\|z\|^d/(20\cdot2^d)\big) \,\dint z\,, \\
&= 20\cdot 2^d (2/\alpha_W+2/\alpha_W^2)\,,
\end{align*}
which is finite. Moreover, Lemma \ref{lem:FirstOrderDifferenceOperator} shows that there are constants $C_{1},C_{2}\in(0,\infty)$ only depending on $W$, $a$ and $d$ such that
$$
\BE[|D_z(t^{a/d}\cL_t^{(a)})|^5]\leq C_{1}\qquad\text{and}\qquad\BE[|D_{z_1,z_2}^2(t^{a/d}\cL_t^{(a)})|^5]\leq C_{2}
$$
for $z,z_1,z_2\in W$ and $t\geq 1$. Finally, Lemma \ref{lem:ExistenceVarianceAsymptotics} and Lemma \ref{lem:VarianceStrictlyPositive} ensure the existence of constants $v_a, t_0\in(0,\infty)$ depending on $W$, $a$ and $d$ such that $t^{-1}\BV[t^{a/d}\cL_t^{(a)}]\geq v_a/2$ for all $t\geq t_0$. Consequently, an application of Proposition \ref{prop:LPS} completes the proof of Theorem \ref{thm:CLT}.
\end{proof}

\subsection*{Acknowledgements}
This research was supported through the program ``Research in Pairs'' by the Mathematisches Forschungsinstitut Oberwolfach in 2014. MS has been funded by the German Research Foundation (DFG) through the research unit ``Geometry and Physics of Spatial Random Systems'' under the grant HU 1874/3-1. CT has been supported by the German Research Foundation (DFG) via SFB-TR 12.

\end{document}